\newtheorem{thm}{Theorem}[section]
\newtheorem{cor}[thm]{Corollary}
\newtheorem{lem}[thm]{Lemma}
\newtheorem{prop}[thm]{Proposition}
\theoremstyle{definition}
\newtheorem{defn}[thm]{Definition}
\theoremstyle{remark}
\newtheorem{rem}[thm]{Remark}
\numberwithin{equation}{section}
\newcommand{\R}{\mathbb R}
\newcommand{\C}{\mathcal C}
\newcommand{\be}{\begin{equation}}
\newcommand{\ee}{\end{equation}}
\newcommand{\eps}{\epsilon}
\newcommand{\ov}{\overline}
\newcommand{\p}{\partial}
\newcommand{\comment}[1]{}
\begin{document}

\title{Obstacle type problems for minimal surfaces}

\author{L. Caffarelli}
\address{Department of Mathematics, University of Texas at Austin, Austin, TX 78712, USA}\email{\tt caffarel@math.utexas.edu}
\author{D. De Silva}
\address{Department of Mathematics, Barnard College, Columbia University, New York, NY 10027, USA}
\email{\tt  desilva@math.columbia.edu}
\author{O. Savin}
\address{Department of Mathematics, Columbia University, New York, NY 10027, USA}\email{\tt  savin@math.columbia.edu}
\thanks{D.~D. is supported by NSF grant DMS-1301535.  O.~S.~is supported by  NSF grant DMS-1200701.}

\begin{abstract} We study certain obstacle type problems involving standard and nonlocal minimal surfaces. We obtain optimal regularity of the solution 
and a characterization of the free boundary. \end{abstract}

\maketitle
\section{Introduction and main results}

In this note, we investigate the regularity of the solution and of the free boundary, for certain obstacle type problems involving classical minimal surfaces and nonlocal minimal surfaces first introduced in \cite{CRS}.

Our first main results concerns  the optimal regularity for nonlocal minimal surfaces constrained below by a sufficiently smooth obstacle (see Section 2 for the precise definitions). 

\begin{thm}\label{main_reg_OMS_intro} Let $\mathcal O \subset \R^n$ be a $C^{1,\alpha}$ domain 
(obstacle), with $\alpha > s+\frac 12$, $s \in (0, \frac 1 2)$. Assume that $E$ is fixed outside $B_1$ and that it minimizes the $s$-perimeter in $B_1$ among all sets that contain $\mathcal O \cap B_1$. If
$0 \in \p E \cap \p \mathcal O$, then $\p E \cap B_{\delta_0}$ is a $C^{1,\frac 12 +s}$ surface in 
$B_{\delta_0}$, for some $\delta_0$ depending on $n,s$ and the $C^{1,\alpha}$ norm of $\p \mathcal O$.
\end{thm}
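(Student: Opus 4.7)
The plan is to reduce the problem to a nonlocal obstacle problem for the height function of $\partial E$ over the common tangent plane with $\partial\mathcal O$ at $0$, and then to extract the sharp exponent $C^{1,\,1/2+s}$ from the theory of the fractional obstacle problem for the operator $(-\Delta)^{(1+2s)/2}$.

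\emph{Initial regularity.} Since $E \supset \mathcal O$ and $0 \in \partial E \cap \partial\mathcal O$, locally $\partial E$ lies on one side of the $C^{1,\alpha}$ surface $\partial\mathcal O$. A corner on $\partial E$ at $0$ would produce a divergent $s$-mean curvature in a sign inconsistent with the variational inequality $H_s(E) \le 0$ that $s$-minimality from above forces on the coincidence set, so $\partial E$ must in fact be tangent to $\partial\mathcal O$ at $0$. Using $\partial\mathcal O$ as a barrier from one side and smooth $s$-minimal competitors (e.g.\ shifted half-spaces) from the other, I would trap $\partial E$ between two close hyperplanes at small scales and invoke the flatness-implies-$C^{1,\beta}$ theorem of Caffarelli--Roquejoffre--Savin for $s$-minimizers to obtain $\partial E \in C^{1,\beta}$ near $0$ for some $\beta > 0$.

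\emph{Reduction to an obstacle problem.} Parametrize $\partial E = \{x_n = u(x')\}$ and $\partial\mathcal O = \{x_n = \phi(x')\}$ over the common tangent hyperplane, with $u \ge \phi$, $u(0)=\phi(0)=0$, $\nabla u(0) = \nabla \phi(0) = 0$. The $s$-minimality of $E$ reads $\mathcal F(u) = 0$ on $\{u > \phi\}$ and $\mathcal F(u) \le 0$ on $\{u = \phi\}$, where $\mathcal F$ is the nonlinear nonlocal mean-curvature operator for graphs. A direct expansion in $u$ shows that $\mathcal F$ linearized at the flat configuration is (a constant multiple of) $(-\Delta)^{(1+2s)/2}$, since the kernel $|x-y|^{-(n+2s)} = |x'-y'|^{-(n-1)-(1+2s)}$ is exactly the Riesz kernel of order $1+2s$ in $\R^{n-1}$. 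Setting $v = u - \phi \ge 0$ therefore yields a nonlocal obstacle problem for an operator $L$ of order $1+2s$, with zero obstacle and right-hand side $f$ combining $(-\Delta)^{(1+2s)/2}\phi$ and the nonlinear remainder of $\mathcal F$. The hypothesis $\alpha > s + \tfrac 12$ is precisely what guarantees that $f$ is Hölder continuous of positive order and so does not spoil the target exponent.

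\emph{Optimal exponent.} For the fractional obstacle problem attached to an operator of order $1+2s$, the sharp solution regularity is $C^{1,\,1/2+s}$, with canonical model profile $c\,(x\cdot e)_+^{3/2+s}$. I would establish this in the nonlinear nonlocal setting above by (a) proving semiconvexity of $v$ in tangential directions via a difference-quotient argument, (b) classifying homogeneous blowup limits of $v$ at contact points on the free boundary through an Almgren/Monneau-type monotonicity formula for $L$, showing all such blowups are multiples of $(x\cdot e)_+^{3/2+s}$, and (c) running an improvement-of-flatness iteration at dyadic scales to obtain $v(x) \le C|x|^{3/2+s}$, which together with the equation upgrades to the $C^{1,\,1/2+s}$ bound on $\partial E$. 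The main difficulty lies in this last stage: constructing the monotonicity formula for the nonlocal operator $L$ and ruling out higher-homogeneity blowups, while absorbing the errors introduced by the nonlinearity of $\mathcal F$ and the finite $C^{1,\alpha}$ regularity of $\phi$.
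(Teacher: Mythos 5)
Your overall strategy matches the paper's: write $\p E$ as a graph over the tangent plane at $0$, linearize the nonlocal mean curvature of a graph to $\Delta^{\frac12+s}$ (your operator of order $1+2s$), and read the optimal $C^{1,\frac12+s}$ exponent off the fractional obstacle problem. However, your ``initial regularity'' step hides the main technical difficulty and, as stated, leaves a genuine gap. The Caffarelli--Roquejoffre--Savin flat-implies-$C^{1,\beta}$ theorem gives only $\beta<2s$, and with that little regularity the linearization does not close: Lemma~\ref{change} requires $\beta>2s$ just so that $K_E$ and $\Delta^{\frac12+s}u$ are separately bounded and pointwise well-defined on the graph, and Corollary~\ref{c6} needs $\beta=\frac12+s-\frac{\delta}{2}$ (so that $3\beta-2s>1$) in order for the nonlinear remainder $g$ in \eqref{700} to satisfy the bound $[g]_{C^\delta(B_r')}\le Cr^{1-(\frac12+s)}$ demanded by Proposition~\ref{ls5} of \cite{CDS}. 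You cannot go straight from $\beta<2s$ to the obstacle-problem framework; the error left by replacing the nonlocal curvature with $\Delta^{\frac12+s}$ is not controlled.

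Bridging from $\beta<2s$ to $\beta$ near $\frac12+s$ is precisely the content of the paper's ``almost optimal regularity'' Theorem~\ref{imp_OMS}, proved through a new improvement-of-flatness lemma (Lemma~\ref{imp_OMS_2}). The two mechanisms that make that work, both absent from your sketch, are: (i) the use of a truncated kernel $f_E$ which decouples near- and far-field contributions so that the Harnack/compactness iteration closes with a gain $\rho^\beta$ for any $\beta<\frac12+s$, not just $\beta<2s$; and (ii) the fact that the compactness limit is itself an obstacle problem for a truncated fractional Laplacian of order $1+2s$, whose known optimal $C^{1,\frac12+s}$ regularity \cite{CSS,CDS} is what drives the gain in the iteration. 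Note also that the hypothesis $\alpha>s+\frac12$ enters already at this improvement-of-flatness stage (the lemma needs $\beta\le\alpha$), not only in ensuring the forcing term is H\"older as you suggest. By contrast, the later part of your program --- semiconvexity, Almgren/Monneau monotonicity, blow-up classification, improvement of flatness at free boundary points for the resulting obstacle problem --- reconstructs from scratch what \cite{CSS} and \cite{CDS} already provide for $\Delta^{\frac12+s}$; the paper simply invokes Proposition~6.4 of \cite{CDS}. That is a matter of detail, not a gap. The missing half is the passage to $C^{1,\beta}$ with $\beta$ close to $\frac12+s$.
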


The key ingredients in the proof of Theorem \ref{main_reg_OMS_intro} are an improvement of flatness lemma and a suitable version of the Almgren monotonicity formula. The strategy is to write $\p E$ as the graph of a 
function $u$ of $n-1$ variables, and reduce the problem to an obstacle problem for 
$\triangle^{\frac 12 + s}u$.

 Once this result is established, we consider the two membranes problem between a standard minimal 
surface and a nonlocal minimal surface. 

The two membranes problem refers to the equilibrium position of two elastic membranes constrained one on top of the other. In the set where they do not touch, the membranes will satisfy a prescribed PDE. The two membranes problem for the Laplacian was first considered by Vergara-Caffarelli \cite{VC} in the context of variational inequalities. See for example \cite{ARS,CDS,CCV,CV} for further results. In particular, in \cite{CDS} we considered the more challenging case when the two membranes satisfy different linear PDEs. Here we consider a nonlinear two membranes problem for different operators. Precisely, we study the two membranes problem between a standard minimal 
surface and a nonlocal minimal surface. The problem can be formulated as follows.

For $s \in (0, \frac 12)$ and $\Omega$ a bounded set in $\R^n,$ let 
\be\label{ps}\mathcal P^s_{\Omega}(E) := L(E \cap \Omega, \C E) + L(E\setminus \Omega, \C E \cap \Omega)\ee be
the fractional $s$-perimeter of the set $E \subset \R^n$ in $\Omega$, introduced in \cite{CRS}, where
$L(A,B)$ represents 
\be\label{L}
L(A,B): = \int_{\R^n \times \R^n} \frac{1}{|x-y|^{n+2s}} \chi_A(x)\chi_B(y) dx dy,\ee
and $\C A$ denotes the complement of the set $A$. 
Let us also denote by 
$$Per_{\Omega}(F) = \int_{\Omega}|\nabla \chi_F| ,$$
the perimeter of a set $F \subset \R^n$ in $\Omega$.

 Define the functional,
$$
\mathcal G(E,F) : =\mathcal P^s_{B_1}(E) + Per_{B_1}(F) + \int_{B_1} (f \chi_E  + g \chi_F) dx $$
with $f, g \in L^\infty(B_1).$
Let $E_0$, $F_0$ be two sets such that $F_0 \cap B_1 \subset E_0 \cap B_1$ and $\mathcal G
(E_0,F_0) < \infty$.  
We define the class of admissible pairs of sets in $\R^n$:
$$\mathcal A:= \{(E,F) \ | F \subset E \quad \mbox{in $B_1$}, \quad F=F_0, \quad E=E_0 \quad 
\mbox{outside $B_1$}  \}.$$

We minimize $\mathcal G$ over the class $\mathcal A$ and study the regularity properties of the minimizing pair $(E,F)$. Notice that the two surfaces may interact, that is $\p E \cap \p F \neq \emptyset,$ independently of the sign of $f,g.$

We show that $\p F$ is an {\it almost 
minimal surface} in the sense of Almgren-Tamanini \cite{A,T}. This means that $\p F$ is a $C^{1,\alpha}$ surface except 
on a singular set $\Sigma \subset \p F$ of Hausdorff dimension $n-8$. Using our main Theorem \ref{main_reg_OMS_intro}, we also obtain the optimal 
regularity of the minimizing pair $(E,F)$ away from the singular set $\Sigma$ of $\p F$.

\begin{thm}\label{T4}
Assume that $f,g \in C^{\frac 12 -s}(B_1)$. Then $\p F \setminus \Sigma$ is locally a 
$C^{2,\frac 12 -s}$ surface and $\p E$ is locally a $C^{1,\frac 12+s}$ surface 
in a neighborhood of $\p F \setminus \Sigma$.  
\end{thm}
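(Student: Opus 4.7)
The plan is to bootstrap regularity by viewing each surface as an obstacle for the other.

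First, I would show that $F$ is an almost-minimizer of perimeter in $B_1$. For any competitor $F'$ with $F \triangle F' \subset B_r(x_0) \Subset B_1$, the set $\tilde F := (F' \cap E \cap B_1) \cup (F_0 \setminus B_1)$ is admissible paired with $E$. Minimality of $(E,F)$, combined with a perimeter comparison between $F' \cap E$ and $F'$ (exploiting the a~priori regularity of the $s$-minimizer $E$), yields $Per_{B_1}(F) \leq Per_{B_1}(F') + C r^{n-1+\gamma}$ for some $\gamma > 0$. The Almgren--Tamanini theorem then gives $\partial F \setminus \Sigma \in C^{1,\beta}$, with $\dim_{\mathcal H}\Sigma \leq n-8$.

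Next, at a regular point $p \in \partial F \setminus \Sigma$: on the non-contact region $\{u_F < u_E\}$, $u_F$ satisfies $H(u_F) = -g$ with $g \in C^{1/2-s}$, so interior Schauder gives $u_F \in C^{2, 1/2-s}$ there, and a short bootstrap lifts $u_F$ to $C^{1, \alpha}$ with $\alpha > s + \tfrac12$ in a neighborhood of $p$. Applying Theorem~\ref{main_reg_OMS_intro} to $E$ with $F$ as the obstacle then yields $\partial E \in C^{1, \frac12 + s}$ in a neighborhood of $p$.

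To obtain the full $C^{2, 1/2-s}$ regularity of $\partial F$ in a neighborhood of $p$, I observe that on the interior of the contact set $\{u_F = u_E\}$ the admissible variation of pushing $F$ and $E$ jointly up or down (which preserves the constraint $F\subset E$) gives the coupled Euler--Lagrange equation
\[
H(u_F) + H^s(u_E) = -(f+g), \qquad u_F = u_E.
\]
Since $H^s$ is of order $1+2s<2$, the regularity $u_E \in C^{1, 1/2+s}$ gives $H^s(u_E) \in C^{1/2-s}$, and Schauder applied to the second-order equation $H(u_F) = -(f+g) - H^s(u_E)$ produces $u_F \in C^{2, 1/2-s}$ on the contact interior.

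The main difficulty is the matching of this regularity across the free boundary $\partial\{u_F = u_E\}$, where $u_F$ transitions between satisfying the mean curvature equation (on the non-contact side) and coinciding with the less regular obstacle $u_E$ (on the contact side); this is handled via a careful analysis of the obstacle problem for the mean-curvature operator with $C^{1, \frac12 + s}$ obstacle, in the spirit of the two-membranes theory of \cite{CDS}.
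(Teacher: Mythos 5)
Your plan is circular at step one and it glosses over the genuinely hard intermediate step. On the almost‑minimality of $F$: you keep $E$ fixed and chop the competitor $F'$ down to $\tilde F = F' \cap E$, hoping to pay only $Cr^{n-1+\gamma}$. But at this stage there is no ``a priori regularity of the $s$-minimizer $E$'': $E$ minimizes $s$-perimeter subject to the constraint $E\supset F$, and the regularity of $E$ near $\partial F$ is exactly what we do not yet know (that is the content of Theorem~\ref{main_reg_OMS_intro}, which requires the \emph{obstacle} to be $C^{1,\alpha}$ with $\alpha>s+\tfrac12$). Moreover, even granting $E$ smooth, $\operatorname{Per}(F'\cap E) - \operatorname{Per}(F')$ is controlled by the perimeter of $\partial E$ inside $B_r(x_0)$, which scales like $r^{n-1}$, not $r^{n-1+\gamma}$ with $\gamma>0$; so you do not get Tamanini almost‑minimality, just a Lipschitz excess bound. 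The paper's argument (Lemma~\ref{almost}) sidesteps both problems by perturbing $E$ rather than $F$: take $(E\cup B_r(x_0),\,F')$, which is automatically admissible, and pay in the $s$-perimeter of $E$, which costs $L(B_r,\C B_r)\lesssim r^{n-2s}=r^{n-1+(1-2s)}$; since $s<\tfrac12$, this gives $\gamma=1-2s>0$ with no regularity of $E$ needed.

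The second gap is the ``short bootstrap'' that lifts $u_F$ from $C^{1,\sigma}$ (with $\sigma=\tfrac12-s<\tfrac12+s$) to $C^{1,\alpha}$ with $\alpha>\tfrac12+s$ near a contact point $p$. This is precisely the crux: Theorem~\ref{main_reg_OMS_intro} only applies once the obstacle $\partial F$ is already known to be $C^{1,\alpha}$ with $\alpha>s+\tfrac12$ near $p$, and the Almgren--Tamanini regularity alone gives only $\sigma=\tfrac12-s$, which is strictly below the threshold. You cannot get there from interior Schauder on the non-contact side: the difficulty is exactly at the free boundary and on the contact set. The paper spends all of Step~1 of the proof of Theorem~\ref{T4} on this, combining the viscosity inequalities $\kappa_F\ge 0$, $\kappa_F + 2\chi_Q K_F\le C$ with the small-perturbation/improvement-of-flatness machinery of \cite{Sa} (plus a truncation of $K_F$) to push $\partial F$ up to $C^{1,\beta}$ for every $\beta<1$ \emph{before} invoking Theorem~\ref{main_reg_OMS_intro}. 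You acknowledge the free-boundary matching as ``the main difficulty'' but the bootstrap you describe does not address it, and without it the chain $\partial F$ regular $\Rightarrow$ $\partial E\in C^{1,\frac12+s}$ $\Rightarrow$ $\kappa_F=-K_E\chi_Q\in C^{\frac12-s}$ $\Rightarrow$ $\partial F\in C^{2,\frac12-s}$ never starts. Finally, the coupled Euler--Lagrange identity on $Q$ (your $H(u_F)+H^s(u_E)=-(f+g)$; the paper's $\kappa_F+K_E=0$) is also not free: because the perimeter and the $s$-perimeter react differently to one-sided perturbations, the paper must first prove the one-sided inequality $\kappa_F+2K_E\le 0$ with a symmetrized perturbation, and only after $\partial E$ is shown to be $C^{1,\beta}$ with $\beta>2s$ does it upgrade to $\kappa_F+K_E=0$ (Lemma~\ref{visc_F}); stating the identity up front skips an argument that occupies a substantial part of Section~3.
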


This theorem is optimal even when $f$ and $g$ vanish. It says that generically, the two 
membranes $\p E$ and $\p F$ separate at different rates away from their common part $\p E \cap 
\p F$. In general they are not smooth across the free boundary as it can be seen from a simple 1D 
example.

In view of Theorem \ref{T4} above, in a neighborhood of a point of $\p E \cap \p F \setminus \Sigma,$ the problem 
can be reduced to the two membranes problem for the fractional Laplacian. Therefore using Theorem 2.6 in \cite{CDS} we obtain 
a characterization of the free boundary of the coincidence set around so-called ``regular" points. We describe it below.

Assume $0 \in (\p E \cap \p F) \setminus \Sigma$ and that in a neighborhood of $0$, $$\p E=\{(x', u(x'))\}, \p F = \{(x', v(x'))\},$$
with $u \in C^{1, \frac 1 2 +s}, v \in C^{2, \frac 1 2 -s}.$ Let $Q$ be the coincidence set $$Q:= \{x' \ | \ u(x')=v(x')\}.$$ 
We say that $x_ 0 \in \p Q$ is a regular point if
$$\limsup_{r \to 0} r^{-\frac 3 2 -s} \|u-v\|_{L^\infty(B_r(x_0))} > 0.$$

\begin{cor} If  $x_0 \in \p Q$ is a regular point, then $\p Q$ is an $n-2$ dimensional $C^{1,\gamma}$ surface around $x_0$. 
\end{cor}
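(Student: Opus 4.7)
The plan is to reduce the question, in a neighborhood of the regular free-boundary point $x_0$, to the linear two-membranes problem for a fractional Laplacian of order $1+2s$ against the classical Laplacian studied in \cite{CDS}, and then invoke Theorem~2.6 of that paper. By Theorem~\ref{T4} I may assume, after a translation and rotation, that near $x_0$ the upper membrane $\partial E$ is the graph of $u\in C^{1,\frac12+s}$ and the lower membrane $\partial F$ is the graph of $v\in C^{2,\frac12-s}$, with $v\le u$ and coincidence set $Q=\{u=v\}$.

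Next, I would linearize the Euler--Lagrange equations around the common tangent plane at $x_0$. The fractional mean curvature equation for $\partial E$ (namely $H^{s}_{\partial E}=-f$ on $\partial E\setminus\partial F$) applied to the graph $x_n=u(x')$ becomes
\[
(-\Delta)^{\frac12+s}u=f_1\quad\text{in }\{u>v\},
\]
with an error that, thanks to the $C^{1,\frac12+s}$ regularity of $u$ and the $C^{\frac12-s}$ regularity of $f$, lies in $C^{\frac12-s}$ and can be absorbed into $f_1$. Similarly the classical minimal surface equation for $\partial F$ linearizes to $\Delta v=g_1$ in $\{u>v\}$ with $g_1\in C^{\frac12-s}$. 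Together with the constraint $u\ge v$ and the complementarity conditions across $\partial Q$ inherited from the variational formulation, this is precisely the two-membranes system with operators of different orders treated in \cite{CDS}.

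Finally, the regular-point hypothesis $\limsup_{r\to 0}r^{-(\frac32+s)}\|u-v\|_{L^\infty(B_r(x_0))}>0$ is exactly the non-degeneracy that characterizes regular points in this fractional two-membranes problem: the critical growth exponent $\frac32+s$ corresponds both to the optimal $C^{1,\frac12+s}$ regularity of the upper membrane and to the one-dimensional profile $(x_n)_+^{\frac32+s}$, which is the critical homogeneous solution associated with $(-\Delta)^{\frac12+s}$. Theorem~2.6 of \cite{CDS} then yields that $\partial Q$, viewed as a subset of the $(n-1)$-dimensional graph slice, is locally a $C^{1,\gamma}$ hypersurface, hence an $(n-2)$-dimensional $C^{1,\gamma}$ surface in $\R^n$. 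The main obstacle in this plan is the careful linearization: one has to verify that the discrepancy between the nonlinear nonlocal curvature operator on the graph of $u$ and $(-\Delta)^{\frac12+s}u$ does indeed land in $C^{\frac12-s}$, so that the right-hand side entering \cite{CDS} has the Hölder regularity required to apply Theorem~2.6. Once this is arranged, the classification of regular free boundary points transfers, and the corollary follows.
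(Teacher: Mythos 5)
Your proposal matches the paper's approach essentially verbatim: the paper itself disposes of this corollary in two sentences by reducing, via Theorem~\ref{T4}, to the two-membranes problem for the fractional Laplacian and then citing Theorem~2.6 of \cite{CDS}. The "main obstacle" you flag --- that the discrepancy between the fractional curvature $K_E$ on the graph and $\Delta^{\frac12+s}u$ has the needed H\"older regularity --- is precisely what Lemma~\ref{change} of the paper supplies, and the coupling condition $\kappa_F+K_E=0$ on $Q$ from part~(ii) of Lemma~\ref{visc_F} provides the complementarity structure, so your sketch is complete modulo those references.
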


The paper is organized as follows. In Section 2 we consider the obstacle problem for nonlocal minimal surfaces and prove our main Theorem \ref{main_reg_OMS_intro}. In doing so, we also show $C^{1,1}$ regularity of ``flat" nonlocal minimal surfaces and provide a sharp quantitative estimate of the norm, depending on the flatness. This improves the result in \cite{CRS}. Higher regularity estimates for nonlocal minimal surfaces were also proved in \cite{BFV}. We prove Theorem \ref{T4} in Section 3. 

\section{The obstacle problem for nonlocal minimal surfaces}

In this section we prove the optimal regularity for nonlocal minimal surfaces constrained above a sufficiently smooth obstacle, that is our main Theorem \ref{main_reg_OMS_intro}. First, we recall some definitions. 

For $s \in (0, \frac 12)$ and $\Omega$ a bounded set in $\R^n,$
let $\mathcal P^s_{\Omega}(E) $ be
the fractional $s$-perimeter of the set $E \subset \R^n$ in $\Omega$, as defined in \eqref{ps}.

\begin{defn} We say that $\p E$ is a $s$-minimal surface in $\Omega$ if for any set $F$ with $F \cap (\C \Omega) = E \cap (\C \Omega)$ we have
$$\mathcal P^s_\Omega (E) \leq \mathcal P^s_\Omega(F).$$ 
\end{defn}

%When there is no possibility of confusion we drop the upper index $s$.

\smallskip

Given a set $E$, we identify $E$ with its interior points in a measure theoretical sense, hence $\p E$ is a closed set. We say that a point $x \in \p E$, is regular from above (resp. below) if there exists a tangent ball to $x$ at $\p E$ completely contained in $\C E$ (resp. $E$).
 
\begin{defn} We say that $\p E$ is a viscosity $s$-minimal supersolution in $\Omega$ if at any  $x_0 \in \p E \cap \Omega$ regular point from below, 
$$K_E(x_0):= \int_{\R^n} \frac{\chi_E - \chi_{\C E}}{|x-x_0|^{n+2s}} dx \leq 0.$$
\end{defn}

Analogously, one defines viscosity $s$-minimal subsolutions. If $\p E$ is both a viscosity $s$-minimal subsolution and supersolution, then we say that $\p E$ is a viscosity $s$-minimal surface. We remark that the quantity $K_E$ is well defined at all regular points and represents the ``fractional curvature" of $\p E.$

%In [CRS] it is shown that $s$-minimal surfaces are viscosity solutions as well.

\smallskip

We recall our main Theorem \ref{main_reg_OMS_intro}. 

\begin{thm}\label{main_reg_OMS} Let $\mathcal O \subset \R^n$ be a $C^{1,\alpha}$ domain 
(obstacle), with $\alpha > s+\frac 12$. Assume that $E$ is fixed outside $B_1$ and that it minimizes the $s$-perimeter in $B_1$ among all sets that contain $\mathcal O \cap B_1$. If
$0 \in \p E \cap \p \mathcal O$, then $\p E \cap B_{\delta_0}$ is a $C^{1,\frac 12 +s}$ surface in 
$B_{\delta_0}$, for some $\delta_0$ depending on $n,s$ and the $C^{1,\alpha}$ norm of $\p \mathcal O$.
\end{thm}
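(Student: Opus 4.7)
My plan is to reduce the obstacle problem for the $s$-perimeter to a fractional obstacle problem for an operator of order $\frac{1}{2}+s$, and then obtain the sharp $C^{1,\frac12+s}$ regularity via a compactness-based improvement of flatness combined with an Almgren monotonicity formula. Since $\mathcal{O}$ is $C^{1,\alpha}$ with $\alpha>s+\frac{1}{2}$, at $0$ I can choose coordinates so that $\p\mathcal{O}\cap B_{\delta_0}=\{x_n=\varphi(x')\}$ with $\varphi\in C^{1,\alpha}$, $\varphi(0)=0$ and $\nabla\varphi(0)=0$. Because $E\supset\mathcal{O}$ and $0\in\p E$, the surface $\p E$ is trapped in a thin slab around $\{x_n=0\}$ near the origin; then by the quantitative $C^{1,1}$ estimate for flat $s$-minimal surfaces (established in Section 2), one can write $\p E\cap B_{\delta_0}=\{x_n=u(x')\}$ with $u\in C^{1,1}$, $u\ge\varphi$, $u(0)=0$ and $\nabla u(0)=0$.

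\emph{Reduction to a fractional obstacle problem.} Linearizing the $s$-mean curvature of a graph at the zero function produces a nonlocal linear operator $\L$ of order $1+2s$, comparable to $(-\triangle)^{\frac{1}{2}+s}$. Setting $w:=u-\varphi$, the minimality of $E$ translates into the obstacle problem
\begin{equation*}
w\ge 0,\qquad \L w=h \ \text{ in }\{w>0\},\qquad \L w\ge h \ \text{ everywhere},
\end{equation*}
where $h$ collects the quadratic remainder of the curvature expansion and the contribution of $\varphi$. The hypothesis $\alpha>s+\frac{1}{2}$ ensures that $h$ is strictly more regular than the target class $C^{\frac12+s}$, so it enters as a lower-order perturbation.

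\emph{Optimal regularity via improvement of flatness.} I would then run a compactness argument: if $w$ is $\eps$-close at some scale to a solution of the model obstacle problem for $(-\triangle)^{\frac{1}{2}+s}$ with flat obstacle, rescaling and passing to the limit produces a global solution of that model problem. The sharp $C^{1,\frac{1}{2}+s}$ regularity for the limiting model (in the style of Caffarelli--Salsa--Silvestre for the fractional obstacle problem) allows one to gain one order in a smaller ball, so that the improvement-of-flatness step propagates with closeness improving by the factor $\rho^{\frac12+s}$. To launch the iteration at the origin one needs a lower bound on the vanishing order of $w$ at $0$, supplied by an Almgren-type monotonicity formula: lifting $w$ through the Caffarelli--Silvestre extension $W$ with weight $y^{1-2s}$, the frequency
\begin{equation*}
N(r)=\frac{r\int_{B_r}|\nabla W|^2\, y^{1-2s}}{\int_{\p B_r} W^2\, y^{1-2s}}
\end{equation*}
is monotone in $r$ up to terms controlled by $h$, and the obstacle condition forces $\lim_{r\to 0^+}N(r)\ge \frac{3}{2}+s$ at any free boundary point, matching the desired regularity exactly.

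\emph{Main difficulty.} The hardest point will be establishing the Almgren monotonicity in the present nonlinear nonlocal setting: unlike the pure $(-\triangle)^{\frac12+s}$ case, the frequency identity picks up error contributions from the nonlinearity of the $s$-mean curvature and from the mere $C^{1,\alpha}$ regularity of the obstacle. The assumption $\alpha>s+\frac{1}{2}$ is precisely what makes these contributions of lower order than the frequency jump of $\frac{3}{2}+s$, so that the classical monotonicity scheme can be closed and then fed into the flatness iteration to yield $\p E\in C^{1,\frac{1}{2}+s}$ in $B_{\delta_0}$.
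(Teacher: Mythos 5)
Your overall architecture matches the paper's: reduce to flatness near the contact point, linearize the $s$-fractional curvature to an operator of order $1+2s$, write an obstacle problem for the height function, and invoke Almgren monotonicity (as in \cite{CDS,CSS}) to get the sharp $C^{1,\frac12+s}$ exponent. However, there is a genuine gap in the step where you claim $u\in C^{1,1}$.

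The quantitative $C^{1,1}$ estimate for flat $s$-minimal surfaces (Theorem \ref{imp_MS}) applies to \emph{unconstrained} viscosity $s$-minimal surfaces. Here $\p E$ is only a viscosity \emph{supersolution} in all of $B_1$ and a full viscosity solution only away from $\overline{\mathcal O}$. Across the free boundary $\p_{\R^{n-1}}\{u>\varphi\}$ the surface cannot be $C^{1,1}$: the optimal regularity at such points is exactly $C^{1,\frac12+s}$, and $\frac12+s<1$ for $s\in(0,\frac12)$. So the very theorem you are proving rules out your intermediate claim. What one actually has, and what the paper establishes as a separate step (Theorem \ref{imp_OMS} via Lemma \ref{imp_OMS_2}), is the \emph{almost optimal} regularity $u\in C^{1,\beta}$ for every $\beta<s+\frac12$, $\beta\le\alpha$. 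This requires its own improvement-of-flatness argument adapted to the one-sided (supersolution) setting: sliding a paraboloid from above either touches the free part of $\p E$ (giving the missing half of the curvature bound, hence the Harnack step), or touches the contact set, where the flatness of the $C^{1,\alpha}$ obstacle closes the argument directly. Starting instead from the free-surface $C^{1,1}$ estimate skips this entirely.

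This matters downstream. The linearization $2\Delta^{\frac12+s}u - K_E = g$ requires $u\in C^{1,\beta}$ with $\beta>2s$ for both sides to be well defined pointwise, and the modulus of continuity of $g$ is governed by $\beta$, not by $\alpha$: one gets $[g]_{C^\delta(B'_r)}\le Cr^{1-\delta}$ precisely because $3\beta-2s>1$ once $\beta$ is chosen close enough to $\frac12+s$ (Lemma \ref{change} and Corollary \ref{c6}). Your assertion that the hypothesis $\alpha>s+\frac12$ alone makes the right-hand side $h$ ``strictly more regular than the target class'' conflates the role of the obstacle regularity $\alpha$ with the role of the surface regularity $\beta$; the former controls $\varphi$, the latter controls $g$, and both inputs are needed to run the Almgren/monotonicity argument of \cite{CDS}. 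Once you replace the erroneous $C^{1,1}$ claim by the genuine almost-optimal $C^{1,\beta}$ step (with its own improvement of flatness) and track the regularity of $g$ through the linearization, your proposal aligns with the paper's proof.
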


\begin{rem}We remark that if $\alpha< s+ \frac 12$ then our methods give that $\p E$ is as regular as the obstacle, i.e. $\p E\in C^{1,\alpha}$ in a neighborhood of the origin.\end{rem}

Clearly $\p E$ is a nonlocal s-minimal surface in $\C \ov{\mathcal O} \cap B_1$. 
In fact,  (see Theorem 5.1 in \cite{CRS})
\begin{equation}\label{vsob1}
\mbox{$\p E$ is a viscosity $s$-minimal surface in $B_1 \setminus \ov{\mathcal O}$, and}
\end{equation}
\be \label{vsob2}
\mbox{$\p E$ is a viscosity supersolution in $B_1$.}
\ee

The theorem above deals with the regularity of the constrained minimal surface at the points where $\p E$ sticks to the obstacle $\p \mathcal O$. In the lemma below, we observe that around such points $E$
 satisfies a flatness condition. Precisely, the following holds.
 
 \begin{lem}\label{f_red}Let $\mathcal O \subset \R^n$ be a $C^{1}$ obstacle. Assume that $E$ is fixed outside $B_1$ and that it minimizes the $s$-perimeter in $B_1$ among all sets that contain $\mathcal O \cap B_1$. If
$0 \in \p E \cap \p \mathcal O$, and $x_n=0$ is the tangent plane to $\p \mathcal O$ at 0, then for any $\eps>0$ there exists $r(\eps)$ such that
\be\label{de}
\{x_n <-\eps r\} \subset E  \subset \{x_n < \eps r\} \quad \mbox{in $B_{r}$.}\ee
 \end{lem}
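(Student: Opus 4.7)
The plan is to establish the two inclusions in \eqref{de} separately. The lower inclusion $\{x_n<-\eps r\}\subset E$ in $B_r$ is an immediate consequence of the obstacle constraint together with the $C^1$-regularity of $\p\mathcal O$: writing $\p\mathcal O$ locally as the graph $x_n=h(x')$ with $h(0)=0$ and $\nabla h(0)=0$, there is a modulus of continuity $\omega\to 0$ such that $|h(x')|\le\omega(|x'|)|x'|$, so once $r$ is small enough that $\omega(r)\le\eps$ one has $\{x_n<-\eps r\}\cap B_r\subset\mathcal O\subset E$.

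The upper inclusion $E\subset\{x_n<\eps r\}$ will be proved by a blow-up/compactness contradiction. Assume it fails: there exist $\eps_0>0$ and $r_k\to 0$ such that $E\cap B_{r_k}$ meets $\{x_n\ge\eps_0 r_k\}$. Set $E_k:=r_k^{-1}E$ and $\mathcal O_k:=r_k^{-1}\mathcal O$; by the natural scaling of $\mathcal P^s$, each $E_k$ minimizes the $s$-perimeter in $B_{1/r_k}$ among sets containing $\mathcal O_k$, and the $C^1$ hypothesis yields $\mathcal O_k\cap B_R\to\{x_n<0\}\cap B_R$ in $L^1$ for every fixed $R$. Using the uniform density estimates and $L^1$-compactness for $s$-minimal sets of \cite{CRS}, extract a subsequential limit $E_k\to E_\infty$ locally in $L^1$ with Hausdorff convergence of boundaries. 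The limit satisfies $E_\infty\supset\{x_n<0\}$, $0\in\p E_\infty$, and is a viscosity $s$-minimal surface in the free region $\{x_n>0\}$ (and a supersolution in $\R^n$).

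The crux of the argument, and the step I expect to be the main obstacle, is the rigidity $E_\infty=\{x_n<0\}$. My plan is a sliding maximum-principle argument: set $t^*:=\sup\{y_n:y\in\p E_\infty\}$ (localized in a large ball if needed) and suppose $t^*>0$. A compactness argument produces a point $p\in\p E_\infty$ with $p_n=t^*$ at which $E_\infty$ lies below the tangent hyperplane, so $p$ is regular from above. Since $p_n>0$ places $p$ in the free region, the $s$-subsolution inequality forces $K_{E_\infty}(p)\ge 0$. On the other hand, setting $H:=\{x_n<t^*\}$ one has $K_H(p)=0$ by symmetry, and since $E_\infty\subset H$ a direct computation gives
\[
K_{E_\infty}(p)=-2\int_{H\setminus E_\infty}\frac{dx}{|x-p|^{n+2s}}\le 0,
\]
with strict inequality unless $E_\infty=H$; but $E_\infty=H$ is ruled out because $0\in\p E_\infty$ and $t^*>0$ would be incompatible. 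This contradicts the subsolution inequality and forces $t^*\le 0$, hence $E_\infty=\{x_n<0\}$. Once this rigidity is in hand, the Hausdorff convergence $\p E_k\to\{x_n=0\}$ on $\ov{B_1}$, combined with the uniform density estimates applied at a boundary point of $E_k$ of height $\ge c\eps_0$ (extracted from the assumed violation by applying the density estimate for $\C E_k$ to a nearby point), contradicts the initial assumption and completes the proof.
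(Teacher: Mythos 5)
Your proposal takes a genuinely different route. The paper argues directly and quantitatively: if some $x_0\in\p E\cap B_r$ sits at height $\ge\eps r$, the density estimates (applied in the unconstrained ball $B_{\eps r/2}(x_0)$) produce a ball of radius $\sim\delta=\eps r/2$ inside $E$ strictly above the obstacle; sliding a unit paraboloid from below to a contact point near $0$ and evaluating the fractional curvature there gives $0\ge K_E\ge -C_0 + C_1\delta^n/r^{n+2s}$, hence $\eps\le Cr^{2s/n}$, which fails for $r$ small. No blow-up and no compactness are needed, and the argument yields a quantitative flatness rate. Your compactness route is viable in outline but longer, and its crucial rigidity step contains two genuine gaps.

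First, the supremum $t^*=\sup\{y_n : y\in\p E_\infty\}$ need not be attained: the maximizing sequence can escape horizontally to infinity. Restricting to a ball $B_R$ does not repair this, since the restricted maximum can lie on $\p B_R$, where you cannot invoke the interior viscosity inequality; one would need $E_\infty$ to be a cone at $0$ to rule out escape, which you do not establish and which is nontrivial here ($E_\infty$ is $s$-minimal only in $\{x_n>0\}$). Second, even at an attained interior top point $p$, the inclusion $\p E_\infty\subset\{x_n\le t^*\}$ only says the connected open set $\{x_n>t^*\}$ lies entirely in $E_\infty$ or entirely in $\C E_\infty$; your sliding step tacitly assumes the latter and leaves the case $\{x_n>t^*\}\subset E_\infty$ unaddressed.

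Both difficulties evaporate if you test at the origin instead of at the top. Since $E_\infty\supset\{x_n<0\}$ and $0\in\p E_\infty$, the point $0$ is regular from below, so the supersolution inequality gives $K_{E_\infty}(0)\le 0$; on the other hand
$$\chi_{E_\infty}-\chi_{\C E_\infty} \;=\; \bigl(\chi_{\{x_n<0\}}-\chi_{\{x_n>0\}}\bigr) + 2\,\chi_{E_\infty\cap\{x_n>0\}},$$
so $K_{E_\infty}(0)=2\int_{E_\infty\cap\{x_n>0\}}|x|^{-n-2s}\,dx\ge 0$, and equality forces $|E_\infty\cap\{x_n>0\}|=0$, i.e.\ $E_\infty=\{x_n<0\}$. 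This is precisely the computation the paper performs directly on $E$ at scale $r$, without passing to any limit, which is why the compactness machinery is unnecessary.
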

 \begin{proof} Given any $\delta>0$, we can assume that (possibly after rescaling),
 $$\{x_n < -\delta\} \cap B_1 \subset \mathcal O \subset \{x_n < \delta\} \cap B_1.$$
Fix $\eps>0$ and $\delta =\frac{\eps r}{2} $ and let us prove that \eqref{de} holds, with $r$ to be specified later. Clearly the left inclusion is satisfied. We prove the other inclusion. Assume it does not hold, and let $x_0 \in B_{r} \cap \p E \cap \{x_n \geq r\eps\}$. Then $B_\delta(x_0)$ is included in $\{x_n > \delta\}$, hence $\p E$ is unconstrained in this ball. By the density estimates (see \cite{CRS}), there is a ball of radius comparable to $\delta$ fully contained in $E.$ 

Assume first for simplicity that $\p E$ has a tangent parabola of unite size by below at $0$. Then, for $C_0,C_1$ universal
$$0 \geq K_E(0) \geq - C_0 + C_1 \frac{\delta^n}{r^{n+2s}}.$$ where the second term comes from the contribution in $B_{\delta}(x_0)$. Hence,
$$\eps \leq C r^{\frac{2s}{n}},$$
and we get a contradiction for $r$ small enough.

If $0$ is not regular from below, then we slide from below a parabola of unit size. Since $0 \in \p E \cap \p \mathcal O,$ the first touching point will occur at $y$ such that $$|y'| \leq \sqrt \delta, \quad |y_n| \leq \delta.$$ Thus the previous argument can be easily repeated with $0$ replaced by $y$, by choosing an appropriate $r$.
 \end{proof}

The first step toward the proof of Theorem \ref{main_reg_OMS} is to show almost optimal regularity near the boundary $\p \mathcal O$ for the non-local minimal surface, i.e. that $\p E$ is as 
regular as the obstacle $\p \mathcal O$ up to $C^{1,\beta}$ with $\beta< \frac 12 +s$. In view of Lemma \ref{f_red}, we can consider the case when $E$ is flat. 

 From now on, a point $x=(x',x_n)$ with $x'=(x_1,\ldots,x_{n-1})$ and $B'_{r}$ denotes the $n-1$ dimensional ball of radius $r$ centered at 0. Also,
$$P_0:=\{x \cdot e_n<0\}.$$ 
and $$\mathcal O := \{x_n < \varphi(x')\}, \quad \varphi \in C^{1,\alpha}.$$

\begin{thm}\label{imp_OMS}[Almost Optimal Regularity] Assume $E$ satisfies \eqref{vsob1}-\eqref{vsob2} and
$$
\{ x_n < -\eps\}  \subset E \subset \{x_n<\eps \}  \quad \mbox{in $B_1$},$$
$$ \int_{\R^n \setminus B_1} \frac{|\chi_E - \chi_{P_0}|}{|x|^{n+2s+1}} dx \leq \eps,$$ and
$$[\nabla \varphi]_{C^\alpha(B'_1)} \leq \eps.$$
Then, for all $\beta$ with $\beta < s+\frac 12$ and $\beta \le \alpha$, there exist $\eps_0, C$ depending on $\alpha, \beta, s, n,$ such that if $\eps \leq \eps_0$
$$\p E \cap B_{1/2} = \{(x', u(x')) \ | \ x' \in B'_{1/2}\}$$ with 
$$\|u\|_{C^{1,\beta}(B'_{1/2})} \leq C \eps.$$
\end{thm}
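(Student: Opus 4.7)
My plan is to prove Theorem \ref{imp_OMS} by a Campanato-type improvement-of-flatness iteration. For any admissible $\beta<s+\tfrac12$ (with $\beta\le\alpha$) I aim to produce $\rho\in(0,\tfrac14)$ and $\eps_0>0$ such that, under the assumptions of the theorem, there is a unit vector $\nu$ with $|\nu-e_n|\le C\eps$ for which
\begin{equation*}
\{x\cdot\nu<-\eps\rho^{1+\beta}\}\cap B_\rho\subset E\cap B_\rho\subset\{x\cdot\nu<\eps\rho^{1+\beta}\}\cap B_\rho,
\end{equation*}
while the rescaled versions of $E$ and $\mathcal O$ still satisfy the smallness hypotheses (flatness, weighted tail, and $C^\alpha$ size of $\nabla\varphi$). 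Iterating this dichotomy produces a Cauchy sequence of tilts whose limit encodes the normal to $\p E$ at the origin and yields the $C^{1,\beta}$ graph estimate.

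First I would combine the flatness hypothesis with interior density estimates for $s$-minimizers \cite{CRS} and the viscosity sub/supersolution properties \eqref{vsob1}--\eqref{vsob2} to represent $\p E\cap B_{3/4}$ as a graph $x_n=u(x')$ with $\|u\|_\infty\le C\eps$ and $u\ge\varphi$. In this parametrization the Euler--Lagrange conditions read $K_E(x',u(x'))=0$ on $\{u>\varphi\}$ and $K_E(x',u(x'))\le 0$ on $\{u=\varphi\}$, in the viscosity sense.

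The core is the improvement-of-flatness lemma, which I would prove by compactness and contradiction. Suppose a sequence $E_k,\varphi_k,\eps_k\to 0$ violates the improvement for every choice of $\rho$. Let $\tilde u_k:=u_k/\eps_k$ and $\tilde\varphi_k:=\varphi_k/\eps_k$. The bound $[\nabla\varphi_k]_{C^\alpha}\le\eps_k$ makes $\tilde\varphi_k$ bounded in $C^{1,\alpha}$, hence converging along a subsequence to some $\tilde\varphi_\infty$. The linearization of the fractional mean curvature along a flat graph has the schematic form
\begin{equation*}
K_E(x',u(x'))=c_{n,s}\,(-\Delta)^{\tfrac12+s}u(x')+\mathcal R(x'),
\end{equation*}
with a remainder $\mathcal R$ of quadratic order in $u$ plus a contribution controlled by the weighted tail appearing in the hypothesis. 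Using H\"older estimates for $\tilde u_k$ and Arzel\`a--Ascoli, I expect $\tilde u_k\to\tilde u_\infty$ locally uniformly with $\tilde u_\infty$ solving, in the viscosity sense,
\begin{equation*}
\min\{(-\Delta)^{\tfrac12+s}\tilde u_\infty,\;\tilde u_\infty-\tilde\varphi_\infty\}=0\quad\text{in }B'_{1/2}.
\end{equation*}
The optimal regularity theory for the obstacle problem associated with this operator of order $1+2s\in(1,2)$ gives $\tilde u_\infty\in C^{1,\tfrac12+s}$ at regular free boundary points, and $C^{1,\alpha}$ at degenerate ones (the regularity being inherited from $\tilde\varphi_\infty$). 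In either case $\tilde u_\infty$ is approximated at the origin by an affine function with error $O(\rho^{1+\beta})$, contradicting the assumed violation for $k$ large.

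Finally I would iterate the lemma to concentric balls $B_{\rho^k}$, checking that the smallness hypotheses survive rescaling: the condition $\beta\le\alpha$ keeps $[\nabla\varphi]_{C^\alpha}$ at scale $\rho^k$ bounded by $\eps\rho^{k\beta}$, and an elementary computation shows the weighted tail assumption scales correctly thanks to the exponent $n+2s+1$ baked into the hypothesis. Summing the geometric series of tilts yields both the graph property and the estimate $\|u\|_{C^{1,\beta}(B'_{1/2})}\le C\eps$. The main obstacle is the compactness step: one must simultaneously justify the local uniform convergence $\tilde u_k\to\tilde u_\infty$ and the passage to the limit in the viscosity formulation, which hinges on tight control of the non-local tails of $\chi_E-\chi_{P_0}$ and on verifying that the remainder $\mathcal R$ is genuinely $o(\eps_k)$ after dividing by $\eps_k$ on compact subsets of $B'_{1/2}$.
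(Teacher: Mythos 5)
Your overall strategy mirrors the paper's: establish an improvement-of-flatness lemma by compactness, identify the limit as an obstacle problem for an operator of order $1+2s$, invoke its optimal $C^{1,\frac12+s}$ regularity theory, and iterate while keeping track of the obstacle's scaling through $\beta\le\alpha$. That part of the blueprint is sound, and your observation that the tail condition with weight $|x|^{n+2s+1}$ and the $C^\alpha$ bound on $\nabla\varphi$ scale correctly under the iteration is exactly what makes the scheme close.

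However, there is a genuine gap in the compactness step, and you flag it yourself without resolving it. You write ``using H\"older estimates for $\tilde u_k$ and Arzel\`a--Ascoli, I expect $\tilde u_k\to\tilde u_\infty$ locally uniformly,'' but those H\"older estimates are precisely what must be proved, and they are the main technical content of the argument. The paper obtains the needed compactness via a Harnack-type inequality (Step~2 of Lemmas~\ref{flat_imp} and~\ref{imp_OMS_2}): sliding parabolas of size $O(\eps)$ and comparing the touching point's fractional curvature to the flatness $\eps$, then iterating the resulting dichotomy enough times (the number growing as $\eps\to0$) to get a uniform H\"older modulus for the vertically stretched graphs. That step in turn rests on Step~1, where the curvature is split into a truncated near-field part and a far-field function $f_E$ whose Lipschitz oscillation is controlled by $\eps$ using both the flatness in $B_1$ and the weighted tail hypothesis. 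Your proposal bypasses both steps by writing $K_E=c_{n,s}(-\Delta)^{\frac12+s}u+\mathcal R$ with a ``quadratic plus tail'' remainder, but without the truncation device you cannot isolate a well-behaved right-hand side $f_0$ in the limit equation, and you have no a priori modulus of continuity with which to run Arzel\`a--Ascoli.

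A second, smaller gap is specific to the constrained setting: in the obstacle problem one can only slide parabolas from \emph{below} $\p E$, so the touching point may land on $\p_c E\cap\p\mathcal O$ where the Euler--Lagrange relation is only an inequality. This yields only the one-sided bound $f_E\ge -C\eps$, and in the limit the right-hand side $f_0$ could a~priori be $+\infty$. The paper handles this by showing $u$ cannot coincide identically with the obstacle limit (else the conclusion is trivial), producing a free point where the equation holds and hence a finite upper bound on $f_0$. Your proposal also asserts that $\tilde\varphi_k:=\varphi_k/\eps_k$ is bounded in $C^{1,\alpha}$; the hypothesis $[\nabla\varphi_k]_{C^\alpha}\le\eps_k$ controls only the seminorm of $\nabla\tilde\varphi_k$, not $\|\tilde\varphi_k\|_{L^\infty}$ or $\|\nabla\tilde\varphi_k\|_{L^\infty}$, and in the paper the limit obstacle is simply a constant, possibly $-\infty$. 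Neither gap is fatal to the strategy, but both correspond to places where the paper has to do real work that your outline does not yet supply.
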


We first prove this theorem in the case of unconstrained $s$-minimal surfaces. Indeed, the two proofs are essentially the same and this second theorem is interesting in its own. It improves the $C^{1,\beta}$ regularity, $\beta <2s$, of flat nonlocal minimal surfaces, to all $\beta <1$ (see Theorem 6.1 in \cite{CRS}). It also provide a sharp quantitative estimate of the norm, depending on the flatness.

In what follows, constants depending only on $n,s$ are called universal and may change from line to line. 

\begin{thm}\label{imp_MS} Let $\p E$ be a viscosity $s$-minimal surface in $B_1$. Assume that
\begin{equation}\label{need}\{ x_n < -\eps\}  \subset E \subset \{x_n<\eps \}  \quad \mbox{in $B_1$},\end{equation}
 and 
$$ \int_{\R^n \setminus B_1} \frac{|\chi_{E} - \chi_{P_0}|}{|x|^{n+2s+1}} dx \leq \eps.$$
Then, for all $\beta<1$, there exist $\eps_0, C$ depending on $\beta, s, n$ such that if $\eps \leq \eps_0$
$$\p E \cap B_{1/2} = \{(x', u(x')) \ | \ x' \in B'_{1/2}\}$$ with 
$$\|u\|_{C^{1,\beta}(B'_{1/2})} \leq C \eps.$$
\end{thm}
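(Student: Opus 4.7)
I would prove Theorem \ref{imp_MS} via the standard improvement of flatness scheme adapted to the nonlocal setting. The core statement to establish is a one-step dichotomy: there exist universal $\rho \in (0,1)$ and $\epsilon_0$ such that, for every $\beta<1$, if $E$ satisfies the hypotheses with $\epsilon\le\epsilon_0$, then in $B_\rho$ one has
\[
\{x\cdot e< -\rho^{1+\beta}\epsilon\}\subset E\subset\{x\cdot e< \rho^{1+\beta}\epsilon\}
\]
for some unit vector $e$ with $|e-e_n|\le C\epsilon$, and the analogous tail bound. Iterating this on dyadic balls yields the $C^{1,\beta}$ estimate at the origin, and translating the base point gives the full norm bound in $B'_{1/2}$. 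As a starting point I would invoke Theorem~6.1 of \cite{CRS} to already write $\partial E\cap B_{3/4}$ as the graph of a function $u$ with $\|u\|_{C^{1,\alpha_0}(B'_{3/4})}\le C\epsilon$ for some small universal $\alpha_0<2s$, so throughout the argument one works with graphs.

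\textbf{Compactness and linearization.} The one-step improvement I would prove by contradiction/compactness. Suppose it fails, so there is a sequence of $s$-minimal surfaces $\partial E_k$ with flatness $\epsilon_k\to 0$ and corresponding graphs $u_k$. The normalized functions $v_k:=u_k/\epsilon_k$ are uniformly bounded in $C^{1,\alpha_0}(B'_{3/4})$, hence a subsequence converges locally uniformly (and in $C^{1,\alpha'}$ for $\alpha'<\alpha_0$) to some limit $v_\infty$. Next one linearizes the vanishing fractional mean curvature equation $K_{E_k}=0$: decomposing the integral in $K_{E_k}(x',u_k(x'))$ into a local piece (in $B_{1/2}$) and a tail piece, the tail contribution vanishes in the limit thanks to the assumption on the outside integral, while a direct Taylor expansion shows the local piece converges to a universal multiple of $(-\Delta)^{\frac12+s}v_\infty(x')$. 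Hence in the viscosity sense
\[
(-\Delta)^{\frac12+s}v_\infty=0\quad\text{in }B'_{1/2},
\]
and standard regularity for this linear nonlocal equation makes $v_\infty$ smooth there. In particular, $|v_\infty(x')-L(x')|\le C_0|x'|^2$ near $0$, where $L$ is the affine part of $v_\infty$ at $0$; choosing $\rho$ so small that $C_0\rho^{2}\le\frac12\rho^{1+\beta}$ yields the desired contradiction, since for large $k$ the graph of $u_k$ is then trapped in an $\epsilon_k\rho^{1+\beta}/2$ neighborhood of the tilted plane $\{x_n=\epsilon_k L(x')\}$.

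\textbf{Iteration and main obstacle.} To iterate one rescales: after one step the flatness becomes $\rho^{1+\beta}\epsilon$ in $B_\rho$, which after the standard parabolic-type dilation $x\mapsto x/\rho$ gives new flatness $\rho^\beta\epsilon$ in $B_1$. The key check is that the tail hypothesis is preserved: under this dilation the tail integrand picks up an extra factor $\rho^{1+2s}$, while the new flatness threshold is only of order $\rho^\beta$, so because $\beta<1<1+2s$ the geometric series of tail contributions from the dyadic annuli is absorbable and the tail bound at scale $\rho^k$ remains below the required $\rho^{k\beta}\epsilon$. The direction $e$ tilts by $O(\rho^{k\beta}\epsilon)$ at step $k$, a summable series, so $e_k\to e_\infty$ with $|e_\infty-e_n|\le C\epsilon$; standard summation then yields $|u(x')-u(0)-\nabla u(0)\cdot x'|\le C\epsilon|x'|^{1+\beta}$ at the origin, hence the $C^{1,\beta}$ estimate after translating the base point. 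The main obstacle is the linearization step: one must carry out the expansion of $K_{E_k}$ around the flat plane with uniform error estimates, showing both that the local part converges to the fractional Laplacian of order $\tfrac12+s$ and that the nonlocal/tail part is genuinely negligible in the limit; everything else is geometric summation that works uniformly in $\beta<1$ precisely because the tail scales with the better exponent $1+2s$.
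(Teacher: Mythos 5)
Your overall strategy is the paper's (improvement of flatness by compactness, linearizing the nonlocal curvature to a fractional Laplacian of order $\tfrac12+s$, iterating with careful tracking of the tail), and your shortcut of invoking the CRS $\varepsilon$-regularity to skip a Harnack-inequality step and work directly with graphs is a legitimate alternative to the paper's Steps 1--2. However, there is a genuine error in your linearization, and it is not cosmetic.

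You claim that after decomposing $K_{E_k}$ into a local piece and a tail piece ``the tail contribution vanishes in the limit thanks to the assumption on the outside integral.'' That is false once you normalize. The tail piece of the curvature at a boundary point $y$ is
$$
f_{E_k}(y)=2\int_{\R^n\setminus B_{1/4}(y)}\frac{\chi_{P_y}-\chi_{E_k}}{|x-y|^{n+2s}}\,dx ,
$$
which is of order $\eps_k$ (not $o(\eps_k)$), and the local piece is of the same order. Since the equation $K_{E_k}=0$ becomes nontrivial only after dividing by $\eps_k$, the normalized tail $\frac{1}{\eps_k}f_{E_k}$ does \emph{not} go to zero: it converges (along a subsequence) to a bounded function $f_0$, and the limit equation is $\Delta_T^{\frac12+s}v_\infty=\frac12 f_0$, where $\Delta_T^{\frac12+s}$ is the \emph{truncated} operator (the graph $u$ only lives on a bounded set, so the full $(-\Delta)^{\frac12+s}$ is not even well defined without an arbitrary extension). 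Moreover, the assumption $\int_{\R^n\setminus B_1}\frac{|\chi_E-\chi_{P_0}|}{|x|^{n+2s+1}}\,dx\le\eps$ does not even bound $|f_E|$ directly: the kernels $|x-y|^{-(n+2s)}$ and $|x|^{-(n+2s+1)}$ are not comparable at infinity. What the tail hypothesis actually controls is the \emph{difference} of kernels, $\bigl|\,|x-y|^{-(n+2s)}-|x-z|^{-(n+2s)}\,\bigr|\lesssim |y-z|\,|x|^{-(n+2s+1)}$, yielding a Lipschitz-type modulus for $f_E$ in the tangential directions. One still has to find a point where $f_E$ is signed (by sliding a parabola and using minimality) and then propagate that bound through the strip using the Lipschitz estimate; only then does one obtain $|f_E|\le C\eps$ and hence $|f_0|\le C$. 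Your proposal is missing this step entirely.

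The good news is that the error is reparable without changing the architecture: since one can prove $f_0$ is Lipschitz (that is precisely estimate (2.9) in the paper, established via the kernel-difference bound above), the limit equation with nonzero right-hand side still gives $v_\infty\in C^{1,1}$ in the interior, so the quadratic trapping $|v_\infty(x')-L(x')|\le C_0|x'|^2$ and the rest of your improvement-of-flatness/iteration argument go through verbatim. You should replace ``tail vanishes, limit equation is $(-\Delta)^{\frac12+s}v_\infty=0$'' by ``normalized tail converges to a Lipschitz $f_0$, limit equation is $\Delta_T^{\frac12+s}v_\infty=\frac12 f_0$,'' and supply the argument that $f_0$ is bounded and Lipschitz.
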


Theorem \ref{imp_MS} easily follows from the next improvement of flatness lemma.

\begin{lem}\label{flat_imp}Let $\p E$ be a viscosity $s$-minimal surface in $B_1$ satisfying \eqref{need}, with $0\in \p E$, 
\be\label{flat1}
\p E \cap B_1 \subset \{|x_n| \leq \eps\},\ee and
\be \label{flat2} \int_{\R^n \setminus B_1} \frac{|\chi_{E} - \chi_{P_0}|}{|x|^{n+2s+1}} dx \leq \eps.\\
\ee
Then, for any $\beta<1$, there exist $\eps_0$,$\rho$ depending on $\beta, s, n$, such that if $\eps \leq \eps_0$ then
\begin{equation}
\p E \cap B_{\rho} \subset \{|x\cdot e| \leq \eps \rho^{1+\beta}\},
\end{equation}
and 
\be
 \int_{\R^n \setminus B_\rho} \frac{|\chi_{E} - \chi_{P_e}|}{|x|^{n+2s+1}} dx \leq  \eps \rho^{\beta -1-2s},
\ee
 for some unit vector $e$ and $P_e=\{x \cdot e <0\}$.
\end{lem}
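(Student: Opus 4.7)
\smallskip

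\noindent\emph{Proof proposal.} I would use the standard compactness-and-linearization scheme for improvement-of-flatness lemmas: argue by contradiction, rescale vertically to unit amplitude, extract a limit, identify it as a solution of the linearized equation, and exploit the smoothness of the limit to contradict the assumed failure of the conclusion.

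Fix $\beta<1$ and suppose the lemma fails, so there exist $\eps_k\to 0$ and viscosity $s$-minimal surfaces $\p E_k$ in $B_1$ with $0\in\p E_k$ satisfying \eqref{need}, \eqref{flat1}, \eqref{flat2} with $\eps=\eps_k$, but violating the conclusion at each scale $\rho$. The $C^{1,\gamma}$ regularity for flat nonlocal minimal surfaces from \cite{CRS}, together with the associated Harnack inequality, lets me write $\p E_k\cap B_{3/4}$ as the graph $x_n=u_k(x')$ and furnishes uniform H\"older estimates for the rescaled functions $\tilde u_k := u_k/\eps_k$. By Arzel\`a--Ascoli, along a subsequence $\tilde u_k\to\tilde u$ locally uniformly on $B'_{3/4}$, with $\|\tilde u\|_{L^\infty}\le 1$.

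The central step is to show that $\tilde u$ is a viscosity solution of
\[
(-\Delta)^{\frac12+s}\tilde u = 0 \quad \text{in } B'_{1/2}.
\]
At a point $x_0'$ where a smooth test function $\varphi$ touches $\tilde u$ from below, the graph $\{x_n=\eps_k\varphi(x')\}$ is touched from below by $\p E_k$, so $K_{E_k}\le 0$ there. Expanding the fractional curvature along the graph to first order in $\eps_k$ and dividing by $\eps_k$ produces $(-\Delta)^{\frac12+s}\varphi(x_0')$ up to an error $o(1)$; the tail hypothesis \eqref{flat2}, after division by $\eps_k$, controls $(\chi_{E_k}-\chi_{P_0})/\eps_k$ against the kernel $|x|^{-(n+2s+1)}$ and gives the uniform integrability needed to pass to the limit in the far-field contribution. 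This is the main obstacle: because the nonlocal operator couples the contact point with values of $\tilde u_k$ arbitrarily far away, one must verify that the linearization error is uniformly small in $k$ and that the tails converge to the correct limit.

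Once $\tilde u$ is identified, interior regularity for fractional harmonic functions of order $2s+1\in(1,2)$ yields $\tilde u\in C^\infty$, so there is an affine function $\ell(x')=\tilde u(0)+a\cdot x'$ with $|a|\le C$ and
\[
|\tilde u(x')-\ell(x')|\le C|x'|^{2} \quad \text{on } B'_{1/2}.
\]
For $\beta<1$ I would pick $\rho$ so small that $C\rho^{2}\le\tfrac14\,\rho^{1+\beta}$. Undoing the rescaling and taking $e=(-a,1)/\sqrt{1+|a|^2}$ as the new unit normal to the tilted hyperplane, one obtains for all $k$ large enough the inclusion $\p E_k\cap B_\rho\subset\{|x\cdot e|\le\eps_k\rho^{1+\beta}\}$, contradicting our standing assumption. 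The improved tail estimate $\eps_k\rho^{\beta-1-2s}$ at scale $\rho$ follows by splitting $\int_{\R^n\setminus B_\rho}$ into $\{\rho<|x|<1\}$, which is controlled by the strip containment for $\p E_k$ together with the closeness of $e$ to $e_n$, and $\{|x|>1\}$, which is controlled by \eqref{flat2} plus the fact that $|P_0\triangle P_e|$ has small relative measure in each annulus.
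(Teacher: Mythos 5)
Your overall strategy -- contradiction, vertical rescaling, compactness, linearization, regularity of the limit, and unwinding the rescaling -- is the same as the paper's. But two of the steps you describe do not work as stated, and both hinge on the same point: the far field.

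First, you claim the limit equation is the homogeneous $(-\Delta)^{\frac12+s}\tilde u=0$. This is not what the limit actually satisfies. The normalized far-field contribution $\frac{1}{\eps_k}\int_{\R^n\setminus B_{1/4}(y)}\frac{\chi_{\mathcal C E_k}-\chi_{E_k}}{|x-y|^{n+2s}}\,dx$ does \emph{not} vanish as $\eps_k\to0$; it converges (along a subsequence) to a nonzero Lipschitz function $f_0$, and one cannot pretend it is absorbed into a globally-defined $(-\Delta)^{\frac12+s}$ because the far-field of $E_k$ is only known to be close to $P_0$ in a weighted $L^1$ sense, not to be the subgraph of any function. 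The paper therefore works with the \emph{truncated} operator $\Delta_T^{\frac12+s}$ and proves that $u$ solves the inhomogeneous equation $\Delta_T^{\frac12+s}u=\frac12 f_0$ in the viscosity sense. Interior estimates for that equation give $C^{1,1}$ (not $C^\infty$), which is still enough to produce a tilted hyperplane with quadratic error and run the contradiction, but your statement of the limit problem and the regularity you invoke for it are both off.

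Second, you assert that the tail hypothesis, after division by $\eps_k$, ``controls $(\chi_{E_k}-\chi_{P_0})/\eps_k$ against the kernel $|x|^{-(n+2s+1)}$ and gives the uniform integrability needed to pass to the limit.'' Notice the hypothesis bounds the integral with kernel $|x|^{-(n+2s+1)}$, whereas the far-field contribution to the curvature is against the \emph{weaker}-decay kernel $|x|^{-(n+2s)}$; the extra power of $|x|$ gives you equicontinuity (a Lipschitz estimate for the truncated far-field term $f_{E_k}$ in the graph variables), not a size bound. The bound $|f_{E_k}|\le C\eps_k$ in the paper is obtained by a separate barrier argument -- sliding paraboloids from above and from below and reading off one-sided curvature bounds at the contact points -- which is exactly what makes the compactness step of the paper go through and which your proposal omits. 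Without that barrier step one has no uniform bound on the rescaled far-field term and hence no convergent subsequence $f_{E_k^*}\to f_0$.

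The final tail estimate at scale $\rho$ you sketch (splitting into an intermediate annulus controlled by quadratic decay of the graph and an outer region controlled by the hypothesis plus the closeness of $e$ to $e_n$) matches the paper in substance.
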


Notice that $\frac 1 \rho E$ satisfies the hypotheses above with $\eps$ replaced by $\eps \rho^\beta$. Therefore we can iterate this lemma indefinitely and obtain the desired conclusion in Theorem \ref{imp_MS}.

\begin{proof} 
The proof is similar to the improvement of flatness theorem for nonlocal minimal surfaces from 
\cite{CRS}, except that in this case we work with truncated kernels and this allows us to impose less 
restrictive conditions at infinity.

We divide the proof in three steps that we sketch below. \medskip
 
\textbf{Step 1-- Truncation.}  
Define,
\begin{equation}\label{KET}f_{E}(y): = \int_{\R^n \setminus B_{1/4}(y)}\frac{\chi_{\C E} - \chi_{E}}{|x-y|^{n+2s}}dx, \quad y \in \R^n.\end{equation}
By minimality, if $y \in \p E$ is a regular point, then 
\begin{equation}\label{KET2}f_E(y) =  p.v. \int_{B_{1/4}(y)}\frac{\chi_E - \chi_{\C E}}{|x-y|^{n+2s}}dx.\end{equation}
We claim that  
\be \label{fe0}
|f_E| \leq C \eps, \quad \mbox{ in $B_1 \cap \{|x_n| < \eps \}$,}
\ee  
\be\label{fe1}|f_E(y) - f_E(z)| \leq C( \eps |y'-z'| + |y_n-z_n|) , \quad \quad \forall \, y,z \in B_{5/8} \cap \{|x_n| < \eps\},\ee with $C$ universal.

To prove this we set
$$P_y:= \{(x-y) \cdot e_n < 0\},$$
and by definition we have $f_{P_y}(y)=0$. Thus
\be \label{fe12}f_E(y)= f_E(y)-f_{P_y}(y)=2\int_{\R^n \setminus B_{1/4}(y)}\frac{\chi_{P_y} - \chi_{E}}{|x-y|^{n+2s}}dx,\ee
where we used that $\chi_{\C E}-\chi_E = 1-2 \chi_E $. 

Let $y,z \in B_1 \cap\{|x_n|<\eps \}$ and denote by $d:= |y-z|$, $B(y)=B_{1/4}(y)$,  $B(z)=B_{1/4}(z)$ and $D= B(y) \cup B(z)$.

Using \eqref{fe12} we find
\begin{align*}
\frac 12 |f_E(y) - f_E(z)| \leq & \int_{\C D}|\chi_E - \chi_{P_y}| \left |\frac{1}{|x-y|^{n+2s}}- \frac{1}{|x-z|^{n+2s}}\right | \, dx \\
&+  \int_{\C D} \frac { |\chi_{P_z}-\chi_{P_y}|}{|x-z|^{n+2s}} dx\\
&+  \int_{D \setminus B(y)}\frac{|\chi_E - \chi_{P_y}|}{|x-y|^{n+2s}}dx +\int_{D \setminus B(z)}\frac{|\chi_E - \chi_{P_z}|}{|x-z|^{n+2s}}dx\\
 =:& \, I_1+I_2+I_3 + I_4.
\end{align*}

We estimate,
$$I_1 \le C d \eps,  $$
by using \eqref{flat1}-\eqref{flat2} and that in $\C D$
$$ \left |\frac{1}{|x-y|^{n+2s}}- \frac{1}{|x-z|^{n+2s}}\right | \le \frac{C d}{1+|x|^{n+1+2s}}.$$
Clearly $$I_2 \le C |y_n-z_n|.$$

We estimate $I_3$ and $I_4$ as
$$I_3 \le C \int_{D \setminus B(y)}  |\chi_E - \chi_{P_y}| \, dx,$$
hence by \eqref{flat1}-\eqref{flat2} we have $I_3 \le C \eps$. Moreover, if $|z| \le 5/8$ we use only hypothesis \eqref{flat1} and estimate the measure of $E \Delta P_y$ in $D \setminus B(y)$ and obtain
$$I_3 \le C d \eps.$$

We estimate $I_4$ similarly, and this proves \eqref{fe1}.

From the computations above we see that in order to prove \eqref{fe0} it suffices to find two points $y_0$, $y_1 \in \p E \cap B_1$ such that 
 $$f_E(y_0) \geq - C\eps, \quad f_E(y_1) \leq  C \eps.$$ 
 Indeed, let us slide by below the parabola $x_n = -C \eps |x'|^2$. We touch $\p E$ at a first point $y_0 \in \p E$. Denote by $P$ the subgraph of the tangent parabola to $\p E$ at $y_0$. Since $y_0$ is a regular point by below we write (see \eqref{KET2})
 \begin{equation}\label{fe}f_E(y_0)\geq \int_{B_{1/4}(y_0)} \frac{\chi_P-\chi_{\C P}}{|x-y_0|^{n+2s}} dx   \geq - C \int_0^{1/4} \frac{\eps r^n}{r^{n+2s}} dr \geq - C \eps.\end{equation}

Similarly, sliding a parabola by above, we obtain the point $y_1.$

 \medskip
 
\textbf{Step 2 -- Harnack Inequality.} In this step we show that there exists a universal $\delta$ such that either
$$\p E \cap B_\delta \subset \{\frac{x_n}{\eps} \leq 1-\delta^2\}$$
or
$$\p E \cap B_\delta \subset \{\frac{x_n}{\eps} \geq 1+ \delta^2\}.$$
Moreover, this statement can be iterated a number of times that tends to $\infty$ as $\eps \to 0$.
We argue similarly as in Lemma 6.9 in \cite{CRS}, but using \eqref{fe0} to control the nonlocal contribution. 

We know that $E$ contains $\{x_n < -\eps\} \cap B_1.$ Assume that it contains more than half the measure of the cylinder 
$$D:=\{|x'| \leq \delta\} \times \{|x_n| \leq \eps\}.$$
Then we show that $E$ must contain $$\{x_n \geq (-1+\delta^2)\eps\} \cap B_\delta.$$
Indeed, if the conclusion does not hold, then we slide by below the parabola $$x_n= -\frac{\eps}{2}|x'|^2.$$
The first touching point $y\in \p E$ satisfies
$$|y'|\leq 2\delta, \quad |y_n+\eps| \leq 2 \eps \delta^2.$$
Let $P$ be the subgraph of the tangent parabola to $\p E$ at $y$. Then,
\begin{align}\label{estimate1}
 f_E(y)  & = \int_{B_{1/4}(y)} \frac{\chi_E - \chi_{{\mathcal C E}}}{|x-y|^{n+2s}} dx \\
\nonumber & = \int_{B_{1/4}(y)}    \frac{\chi_P - \chi_{{\mathcal C P}}}{|x-y|^{n+2s}} dx + \int_{B_{1/4}(y)}    \frac{\chi_{E \setminus P}}{|x-y|^{n+2s}} dx\\
\nonumber &=:I_1+I_2.\\ \nonumber
\end{align}
For $\eps \leq \delta$ we estimate
$$I_1 \geq -C \int_0^{1/4} \frac{\eps r^n}{r^{n+2s}} dr \geq - C \eps,$$
and since $E \setminus P$ contains more than 1/4 of the measure of $D$,
$$I_2 \geq C\eps \delta^{n-1}/(4 \delta)^{n+2s} \geq C \delta^{-1-2s} \eps.$$
 If $\delta$ is smaller than a universal constant, we contradict \eqref{fe0}.

The fact that the lemma can be iterated follows because, after rescaling, the H\"older modulus of continuity of $\p E$ outside $B_1$ is integrable at $\infty$ and it does not affect the computations above.  Indeed, assume that we can iterate our Harnack inequality $k$ times and let us call $$\tilde E = \delta^{-k} E, \quad \tilde \eps = \eps (1-\frac{\delta^2}{2})^k \delta^{-k}.$$ Then, for $m=0,\ldots, k$ \be \label{iteration2} \p \tilde E \cap B_{\delta^{-m}} \subset S_m, \ee where $S_m$ is a strip of height $2\tilde \eps (1-\frac{\delta^2}{2})^{-k} \delta^{k-m}$ and say for simplicity that $0 \in S_m.$
To iterate one more time, we need to estimate $f_{\tilde E}$ as in \eqref{estimate1}. Clearly, for $\tilde y= \delta^{-k} y, y \in \p E$,
$$K_{\tilde E, \frac{\delta^{-k}}{4}}(\tilde y):= \int_{\R^n \setminus B_{\frac{\delta^{-k}}{4}}(y)}\frac{\chi_{\C \tilde E} - \chi_{\tilde E}}{|x-\tilde y|^{n+2s}}dx = \delta^{2sk} f_{E}(y),$$
and hence $$|K_{\tilde E, \frac{\delta^{-k}}{4}}| \leq C \tilde \eps.$$
%hence
%$$|f_{\tilde E_k, \frac{\delta^{-k}}{4}}| \leq C \tilde \eps_k.$$
 Using this fact and \eqref{iteration2} we can bound  $f_{\tilde E}$ as desired.

%For more details see for example Lemma 6.9 in \cite{CRS}.

\medskip

\textbf{Step 3 -- Compactness.} Fix $\rho>0$ to be specified later. Assume by contradiction that there exist a sequence $\eps_k \to 0$ and a sequence $\{\p E_k\}$ of viscosity $s$-minimal surfaces in $B_1$ with $0\in \p E_k$, satisfying
\be\label{flatk1}
\p E_k \cap B_1 \subset \{|x_n| \leq \eps_k\},\ee
\be\label{flatk2} \int_{\R^n \setminus B_1} \frac{|\chi_{E} - \chi_{P_0}|}{|x|^{n+2s+1}} dx \leq \eps_k,
\ee but not the conclusion of the lemma. Then, by Step 2, the sets
$$\p E^*_k := \{(x', \frac{x_n}{\eps_k}) \ | \ x \in \p E_k\}$$
converge uniformly on $B'_{3/4} \times [-1,1]$ (up to extracting a subsequence) to the set 
$$E^*_0:= \{(x', u(x'))\}$$
where 
$$u: B'_{3/4} \to \R \quad \mbox{ is H\"older continuous}, \quad \mbox{$u(0)=0$ and $|u| \leq 1$.}$$ Moreover, by \eqref{fe0}-\eqref{fe1} the functions $$f_{E_k^*}(x):= \frac{1}{\eps_k}f_{E_k}(x', \eps_k x_n)$$ are uniformly Lipschitz continuos, and
(up to extracting a subsequence) $$f_{E^*_k} \to f_0$$ uniformly on the cylinder $B'_{3/4} \times [-1,1]$ with $f_0$  Lipschitz with norm controlled by a universal constant.

Next we show that $u$ satisfies 
\begin{equation}\label{DT}
\Delta_T^{\frac 1 2+s} u = \frac 12 f_0(x',u) \quad \text{in $B'_{1/2}$},
\end{equation}
in the viscosity sense, were $\Delta_T^{\frac12 +s}$ is the truncated fractional Laplacian
$$\Delta_T^{\frac 12+s} u(x') = \int_{B'_{1/4}(x')} \frac{u(y') - u(x')}{|y'-x'|^{n+2s}} dy'.$$

Let $\varphi$ be a smooth function which touches $u$ strictly by below, say for simplicity at 0. Then, for $k$ large enough, $\p E^*_k$ is touched by below at some $x^*_k$, by a vertical translation of $\varphi$. 
Hence $\p E_k$ is touched by below at $x_k=({x^*_k}', \eps_k  \, \, x^*_k \cdot e_n)$ by a vertical translation of $\eps_k \varphi$, and 
$$f_{E_k^*}(x_k^*)=\frac{1}{\eps_k} f_{E_k}(x_k) =  \frac{1}{\eps_k}\int_{B_{1/4}(x_k)} \frac{\chi_{E_k} - \chi_{{\mathcal{C}E_k}}}{|x-x_k|^{n+2s}} dx .$$
We first remark that we can change the domain of integration from balls to cylinders since this creates only a small error. Indeed, denote by
$$D_{\rho}(x_k) := B'_{\rho}(x'_k) \times \{|(x-x_k)\cdot e_n| < \rho\}.$$
Since $\p E_k \cap B_1 \subset \{|x_n| \leq \eps_k\}$ we see that the measure of $D_{1/4}(x_k) \setminus B_{1/4}(x_k)$ that is included in the strip $\{|x_n| \le \eps_k \}$ is of order $\eps_k^3$, hence
\be\label{circle-square} \frac{1}{\eps_k}\int_{B_{1/4}(x_k)} \frac{\chi_{E_k} - \chi_{{\mathcal{C}E_k}}}{|x-x_k|^{n+2s}} dx =  \frac{1}{\eps_k}\int_{D_{1/4}(x_k)} \frac{\chi_{E_k} - \chi_{{\mathcal{C}E_k}}}{|x-x_k|^{n+2s}} dx + o_{\eps_k}(1), \ee
with $o_{\eps_k}(1) \to 0$ as $\eps_k \to 0$.

We bound the right hand side in \eqref{circle-square} similarly as in Lemma 6.11 \cite{CRS}. 

 We integrate first 
in the cylinder $D_{1/4}(x_k) \cap \{|x'-x_k'| \le \delta\}$ and use that the graph of $\eps_k \varphi$ is tangent by below at $x_k$. The contribution in this cylinder is greater than $-o_\delta(1) \to 0$ as $\delta \to 0$. 

In the remaining part of $D_{1/4}(x_k)$ we integrate first in the vertical $x_n$ direction and cancel the parts from $E_k$ and $\C E_k$. We are left with an integration over an oriented segment of length $2 \eps_k(u(x')-u(x'_k) + o_{\eps_k}(1))$ included in the strip $\{ |x_n| \le \eps_k\}$ and on this segment we can write
$$\frac{1}{|x-x_k|^{n+2s}}= \frac{1}{|x'-x_k'|^{n+2s}} + C(\delta) O(\eps_k^2) .$$

In conclusion
\begin{align*}
\frac{1}{\eps_k}\int_{D_{\frac 14}(x_k)} \frac{\chi_{E_k} - \chi_{{\mathcal{C}E_k}}}{|x-x_k|^{n+2s}} dx \ge & 2 \int_{B'_{\frac 1 4}(x'_k) \setminus B'_\delta(x'_k)} \frac{u(x')-u(x_k')}{|x'-x_k'|^{n+2s}} dx'\\ & - o_{\delta}(1) - C(\delta) o(\eps_k) + o_{\eps_k}(1).\end{align*}

We let first $\eps_k \to 0$ and then $\delta \to 0$ and we obtain that \eqref{DT} holds  in the viscosity sense at $0$.
 
Thus our claim \eqref{DT} holds, and by interior estimates for this equation we have that the $C^{1,1}$ norm of $u$ in $B'_{1/4}$ is bounded by a universal constant.
Then, for all $\eta\leq 1/4$, any point $x$ on $\p E_0^* \cap B_\eta$ satisfies
$$|x \cdot \nu| \leq C \eta^2,\quad \nu=\frac{(-\nabla u(0),1)}{\sqrt{|\nabla u(0)|^2+1}}$$
with $C$ universal. Since $\p E^*_k \to \p E_0^*$ uniformly on compacts, we conclude that for $k$ large, 
\be\label{quadratic}\p E_k \cap B_\eta \subset \{|x \cdot \nu_k| \leq C \eps_k \eta^2 + o(\eps_k)\},\quad \nu_k= \frac{(\eps_k \nu', 1)}{\sqrt{1+\eps^2_k|\nu'|}}.\ee
Hence, for $\rho$ small (depending on $\beta$),
$$\p E_k \cap B_\rho \subset \{|x \cdot \nu_k| \leq \eps_k \rho^{1+\beta}\}.$$
Now, call 
$$P_k := \{x \cdot \nu_k < 0\}.$$ Then, 
\begin{align*}
\int_{\C B_{\rho}} \frac{|\chi_{E_k} - \chi_{P_k}|}{|x|^{n+2s+1}} dx & =  \int_{\C B_{1/4}} \frac{|\chi_{E_k} - \chi_{P_k}|}{|x|^{n+2s+1}} dx+  \int_{B_{\frac 14} \setminus B_{\rho}} \frac{|\chi_{E_k} - \chi_{P_k}|}{|x|^{n+2s+1}} dx \\
&=I_1+I_2.
\end{align*}
We show that by choosing $\rho$ possibly smaller, the terms above are bounded by $\eps_k \rho^{\beta-1-2s}$, therefore contradicting that $E_k$ does not satisfy the conclusion of the lemma.

Indeed, since $E_k$ satisfies \eqref{flatk1},\eqref{flatk2} and $|e_n -\nu_k|\leq C \eps_k,$ we have
$$I_1  \leq C \eps_k.$$
Moreover, using \eqref{quadratic}
$$I_2 \leq C \int_{\rho}^\frac 14 \frac{\eps_k r^2}{r^{n+2s+1}}r^{n-2}dr = C\eps_k \rho^{-2s}.$$
\end{proof}

We now prove Theorem \ref{imp_OMS}, that is the case when the nonlocal minimal surface is constrained by an obstacle $\mathcal O$. We need the following improvement of flatness lemma whose proof follows the lines of Lemma \ref{flat_imp}.

\begin{lem}\label{imp_OMS_2} Assume $E$ satisfies \eqref{vsob1}-\eqref{vsob2} with $0 \in \p E$ and 
$$
\p E \cap B_1 \subset \{|x_n| \leq \eps\},$$
\be\label{F2} \int_{\R^n \setminus B_1} \frac{|\chi_E - \chi_{P_0}|}{|x|^{n+2s+1}} dx \leq \eps,\ee and
\be\label{F3} 
[\nabla \varphi]_{C^\alpha(B'_1)} \leq \delta_0 \eps.\ee
If $\beta < s+ \frac 12$ and $\beta \le \alpha$, and $\eps \le \eps_0$ is sufficiently small then
\begin{equation}
\p E \cap B_{\rho} \subset \{|x\cdot e| \leq \eps \rho^{1+\beta}\},
\end{equation}
and 
\be
 \int_{\R^n \setminus B_\rho} \frac{|\chi_{E} - \chi_{P_e}|}{|x|^{n+2s+1}} dx \leq  \eps \rho^{\beta-1-2s},
\ee
 for some unit vector $e$ and $P_e=\{x \cdot e <0\}$.
 The constants $\delta_0$, $\eps_0$ and $\rho$ are small and depend on $\alpha, \beta, s, n$.
 \end{lem}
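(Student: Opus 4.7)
The plan is to adapt the three-step scheme of Lemma \ref{flat_imp} to the obstacle setting, using the smallness condition \eqref{F3} to ensure that $\varphi$ acts only as a lower-order perturbation in the iteration. The genuinely new ingredient is that in the compactness step the limiting profile will solve an obstacle problem for the truncated fractional Laplacian $\Delta_T^{1/2+s}$; the optimal regularity for that obstacle problem contributes the exponent $1/2+s$, while the regularity of $\varphi$ contributes the exponent $\alpha$, and the minimum of the two yields the final $\beta$.

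For the Truncation step I work with the kernel $f_E$ in \eqref{KET}. The pointwise bound $|f_E|\le C\eps$ and the Lipschitz estimate \eqref{fe1} rely only on the flatness hypotheses and identity \eqref{fe12}, not on minimality, so they hold verbatim. Where \eqref{KET2} used full minimality, in the obstacle case one has equality only at regular points of $\p E$ lying in $B_1\setminus\ov{\mathcal O}$ (by \eqref{vsob1}) and the one-sided inequality $f_E(y)\le\mathrm{PV}\int_{B_{1/4}(y)} (\chi_E-\chi_{\C E})/|x-y|^{n+2s}\,dx$ at contact points (by the supersolution property \eqref{vsob2}); this is enough for the rest of the argument. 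The Harnack step then runs as in Step 2 of Lemma \ref{flat_imp}: the paraboloids of height $\eps$ and diameter $\delta$ used there stay above $\p\mathcal O$ automatically because $\delta_0\eps\delta^{1+\alpha}\ll\eps\delta^2$ once $\delta_0$ is universally small.

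For the Compactness step I take a contradiction sequence $E_k,\varphi_k$ with $\eps_k\to 0$ and rescale to $\p E_k^*:=\{(x',x_n/\eps_k):x\in\p E_k\}$ and $\psi_k(x'):=\varphi_k(x')/\eps_k$. The $\p E_k^*$ subconverge locally uniformly to a graph $\{(x',u(x'))\}$, and by \eqref{F3} the $\psi_k$ subconverge in $C^{1,\alpha'}$ for $\alpha'<\alpha$ to some $\psi\in C^{1,\alpha}$ with $\psi(0)=0$, $\nabla\psi(0)=0$ and $[\nabla\psi]_{C^\alpha}\le\delta_0$. Passing \eqref{vsob1}--\eqref{vsob2} to the limit through \eqref{fe0}--\eqref{fe1} exactly as in the derivation of \eqref{DT}, one finds that $u$ solves the obstacle problem
\begin{equation*}
u\ge\psi,\qquad \Delta_T^{\frac12+s} u\le 0,\qquad \Delta_T^{\frac12+s} u=0 \text{ in } \{u>\psi\},
\end{equation*}
in the viscosity sense, with $u(0)=\psi(0)=0$ a free boundary point. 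The optimal regularity theory for the fractional obstacle problem with a $C^{1,\alpha}$ obstacle then yields $|u(x')-L(x')|\le C|x'|^{1+\beta}$ for a suitable affine $L$ and every $\beta<\min(1/2+s,\alpha)$. Transferring this back through the rescaling and estimating the outside-$B_\rho$ tail exactly as at the end of the proof of Lemma \ref{flat_imp} contradicts the assumed failure of the lemma.

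The step I expect to be most delicate is the identification of the limit as a bona fide obstacle problem at contact points. Off $\{u=\psi\}$ the viscosity argument reproduces the unconstrained proof, but on the coincidence set I must show that no smooth test function strictly above $\psi$ can touch $u$ from above; this reflects the geometric fact that $\p E_k$ is pinned to $\p\mathcal O_k$ precisely on $\mathcal O_k\cap\p E_k$, and follows by testing against the obstacle itself. Once the obstacle problem is in place, setting $w=u-\psi\ge 0$ gives $\Delta_T^{1/2+s}w=-\Delta_T^{1/2+s}\psi$ in $\{w>0\}$ with a $C^{\alpha-2s}$ right-hand side, and the optimal $C^{1,1/2+s}$ regularity of $w$ at regular free boundary points, capped at $C^{1,\alpha}$ by $\psi$, produces exactly the exponent stated in the lemma.
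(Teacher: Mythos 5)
Your overall scheme -- truncation, Harnack, compactness -- matches the paper's, but as written there are several genuine gaps.

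First, a sign issue in the truncation step. From the definition $f_E(y) = \int_{\R^n\setminus B_{1/4}(y)}(\chi_{\C E}-\chi_{E})\,|x-y|^{-n-2s}\,dx$ and $K_E(y)= \mathrm{p.v.}\int_{B_{1/4}(y)}(\chi_E-\chi_{\C E})|x-y|^{-n-2s}\,dx - f_E(y)$, the supersolution condition $K_E(y)\le 0$ at a point regular from below gives $f_E(y)\ \ge\ \mathrm{p.v.}\int_{B_{1/4}(y)}(\chi_E-\chi_{\C E})|x-y|^{-n-2s}\,dx$, not $\le$; this is exactly the paper's \eqref{KET300} and the source of the one-sided bound $f_E\ge -C\eps$. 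Your version has the inequality reversed.

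Second, and more importantly, the Harnack step is where the new difficulty in the obstacle case actually lies, and your proposal skips it. The unconstrained Harnack argument needs \emph{both} bounds $|f_E|\le C\eps$: the lower bound from a parabola touching $\p E$ from below (always valid since the supersolution property holds at all points of $\p E$), and the upper bound from a parabola touching from above. In the obstacle case the first touch of the upper parabola may occur on $\p_c E=\p E\cap\p\mathcal O$, where the subsolution condition is not available, so the upper bound $f_E\le C\eps$ does not follow from the equation. Your claim that the paraboloids ``stay above $\p\mathcal O$ automatically'' does not help here: the parabola is $\ge\p E\ge\p\mathcal O$, and equality at the touching point is precisely the constrained case you would need to exclude. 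The paper handles this with a case analysis: if the touch from above lands on $\p_c E$, then $\p\mathcal O$ is near the top of the strip there, and by the flatness hypothesis \eqref{F3} it stays near the top in $B_\delta$, so $\p E\ge\p\mathcal O$ forces the Harnack dichotomy directly, with no bound on $f_E$ needed at that point. This case distinction is the essential new ingredient of the obstacle case and is missing from your argument.

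Third, the compactness step. You normalize $\psi_k=\varphi_k/\eps_k$ with $\psi(0)=0$ and $\nabla\psi(0)=0$; but the lemma does not assume $0\in\p\mathcal O$ nor any pointwise normalization of $\varphi$, and $\varphi_k/\eps_k$ can in fact escape to $-\infty$ (the paper explicitly allows $\varphi_0=-\infty$). Moreover, by keeping $\delta_0$ fixed along the sequence you obtain a genuinely $C^{1,\alpha}$ limiting obstacle, and you must then invoke the optimal regularity theory for the fractional obstacle problem with a merely $C^{1,\alpha}$ obstacle, which is a more delicate input than needed. The paper avoids this by letting $\delta_0\to 0$ along the contradiction sequence, so $[\nabla\psi_k]_{C^\alpha}\to 0$ and the limit obstacle is a constant; the regularity theory required is then only the classical constant-obstacle case.

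So even though the three-step strategy is correct, the handling of the constrained touching point in the Harnack step, the possible disappearance of the obstacle in the blow-up, and the choice of keeping $\delta_0$ fixed all need to be reconsidered before the argument closes.
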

 
 We remark that we obtain Theorem \ref{imp_OMS} by applying the lemma above with $\tilde \eps=\delta_0^{-1} \eps$ and then iterate it indefinitely.

\begin{proof} In what follow we denote by $\p_f E$ and $\p_c E$ the free part of $\p E$, and respectively the constrained part of $\p E$, i.e. 
$$\p_{f} E := \p E \cap \C \ov{\mathcal O}, \quad \p_c E: = \p E \cap \p \mathcal O.$$
\textbf{Step 1-- Truncation.}  
Define as before,
\begin{equation}\label{KET00}f_E(y) = \int_{\R^n \setminus B_{1/4}(y)}\frac{ \chi_{\C E}-\chi_E }{|x-y|^{n+2s}}dx, \quad y \in \R^n.\end{equation}
By minimality, if $y \in \p_f E$ is a regular point, then 
\begin{equation}\label{KET200}f_E(y) =  p.v. \int_{B_{1/4}(y)}\frac{\chi_E - \chi_{\C E}}{|x-y|^{n+2s}}dx.\end{equation}On the other hand, if $y \in \p  E$ is regular from below with respect to a graph, then
 \begin{equation}\label{KET300}f_E(y) \geq  p.v. \int_{B_{1/4}(y)}\frac{\chi_E - \chi_{\C E}}{|x-y|^{n+2s}}dx.\end{equation}
As in Step 1 in Lemma \ref{flat_imp} we see that $f_E$ satisfies \eqref{fe1}, and due to \eqref{KET300} only half of \eqref{fe0}, i.e.
$$f_E \geq - C \eps, \quad \mbox{ in $B_1 \cap \{|x_n| < \eps \}$.}$$

 \medskip
 
\textbf{Step 2 -- Harnack Inequality.} In this step we show as in Lemma \ref{flat_imp} that there exists a universal $\delta$ such that either
\be\label{1}\p E \cap B_\delta \subset \{\frac{x_n}{\eps} \leq 1-\delta^2\}\ee
or
\be\label{2}\p E \cap B_\delta \subset \{\frac{x_n}{\eps} \geq 1+ \delta^2\}.\ee

Let us slide from above the parabola,
$$x_n= \frac{\eps}{2} |x'|^2,$$
till either we touch $\p E$ or $\{x_n=\eps(1-\delta^2)\}.$ In the latter case, \eqref{1} holds. Otherwise, the first touching point $y_1$ may occur either on $\p_f E$ or on $\p_c E.$ Again, if the latter happens, then \eqref{2} is satisfied. The reason for this is that $y_1 \in \p \mathcal O$, and $\p E$ is above $\p \mathcal O$, and the obstacle satisfies the hypothesis \eqref{F3}.
 
If $y_1 \in \p_f E$, then by \eqref{KET200} we conclude that $f_E(y_1) \leq C \eps$ and therefore the other half of \eqref{fe} also holds. Now the result follows as in the unconstrained case.

\medskip

\textbf{Step 3 -- Compactness.} Fix $\rho>0$ to be specified later. Assume by contradiction that there exist a sequence $\eps_k \to 0$, $\delta_0 \to 0$, and a sequence $\{\p E_k\}$ 
satisfying \eqref{vsob1}-\eqref{vsob2}, $0\in \p E_k$, and a sequence $\varphi_k$ satisfying \eqref{F3}, such that 
$$
\p E_k \cap B_1 \subset \{|x_n| \leq \eps_k\},$$
$$ \int_{\R^n \setminus B_1} \frac{|\chi_{E} - \chi_{P_0}|}{|x|^{n+2s+1}} dx \leq \eps_k,
$$ but  $\p E_k$ does not satisfy the conclusion of the lemma. Then, by Steps 1and 2,  the sets
$$\p E^*_k := \{(x', \frac{x_n}{\eps_k}) \ | \ x \in \p E_k\}$$
converge uniformly on $B'_{3/4} \times [-1,1]$ (up to extracting a subsequence) to the set $$E^*_0:= \{(x', u(x'))\}$$
where $u: B'_{3/4} \to \R$ is H\"older continuous, $u(0)=0$ and $|u| \leq 1.$ 

Moreover,  $\varphi_k \to \varphi_0$ uniformly in $B_1'$ where $\varphi_0$ is a constant, possibly $-\infty$.

Finally, the functions 
 $$f_{E_k^*}(x):= \frac{1}{\eps_k}f_{E_k}(x', \eps_k x_n),$$
converge uniformly on the cylinder $B'_{3/4} \times [-1,1]$ to a limiting function $f_0$ which has bounded Lipschitz seminorm, and it is bounded below but not above i.e. $f_0$ could also be $+\infty$.

We show that $u$ satisfies the following obstacle problem, in the viscosity sense:
$$\Delta_T^{ \frac 12+s} u = \frac 12 f_0 \quad \text{in $B'_{1/2} \cap \{u>\varphi_0 \}$}, \quad \quad \quad \Delta_T^{ \frac 12+s} u \le \frac 12 f_0 \quad \text{in $B'_{1/2}$},   $$
and $\|f_0\|_{C^{0,1}} \le C$.

We only need to show that $f_0$ is bounded above, since then the claim follows as in Lemma \ref{flat_imp}.
We argue similarly as in Step 2.
First we notice that $u$ and $\varphi_0$ cannot coincide identically in $B'_{1/2}$ since then they 
would both vanish, and the conclusion of lemma would clearly hold for large $k$'s. 
This means that we can slide a parabola $x_n=C|x'-x_0'|^2$ with $x_0' \in B_{1/2}'$ by above and 
obtain a contact point $y'$ in $B_{5/8}'$ where $u>\varphi_0$. At this point we obtain $f_0(y') < C$ 
and our claim is proved. 

From the optimal regularity in the obstacle problem for fractional Laplacian (see \cite{CSS, CDS}) we find that $u \in C^{1,\frac 12 +s}(B_{1/4})$ and then we reach a contradiction as in Lemma \ref{flat_imp}.
\end{proof}

In the next lemma we estimate the difference between the fractional curvature $K_E$ and its linearization $\triangle ^{\frac 12 +s}$ for a $C^{1,\beta}$ graph with $\beta >2s$, in a neighborhood of a point which has horizontal tangent plane. Notice that since $\beta>2s$, the fractional curvature $K_E$ and $\triangle ^{\frac 12 +s}$ are bounded and well defined at all points of the graph.

\begin{lem}\label{change}Let $$\p E \cap \{|x'| \le 2\}:= \{(x', u(x')) \ | \ u \in C^{1,\beta}(B'_2)\}, \quad 1>\beta >2s,$$
$$\nabla u(0)=0, \quad u(0)=0, \quad \|u\|_{C^{1,\beta}(B'_2)} \leq 1.$$
Extend $u=0$ outside $B'_2.$ Then,
\be\label{700} 2\Delta^{\frac 1 2 +s} u(x') - K_E(x',u(x'))= g(x')\ee 
with $|g| \leq C$ and having the following modulus of continuity
\be\label{701} |g(x'_1)-g(x'_2)| \leq C (\max|x'_i|)^{2\beta} |x_1' - x_2'|^{\beta -2s} + C|x'_1-x'_2|, \ee 
with $C$ depending on $n,\beta, s.$
\end{lem}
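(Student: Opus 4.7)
The idea is to express $K_E(x',u(x'))$ as an integral over $\R^{n-1}$ via Fubini in the vertical variable, identify $g$ as the ``nonlinear'' correction to $2\Delta^{1/2+s}u$, and then estimate both its size and its H\"older modulus by careful splitting of the integration region. Writing $E=\{y_n<u(y')\}$ locally and integrating the sign kernel $|x-y|^{-(n+2s)}$ in $z_n$ yields
\[
K_E(x',u(x')) = \int_{\R^{n-1}} F(u(z')-u(x'),|z'-x'|)\,dz', \qquad F(a,r):=\int_0^a \frac{2\,dt}{(r^2+t^2)^{(n+2s)/2}}.
\]
Splitting $F(a,r) = 2a/r^{n+2s} - H(a,r)$ with
\[
H(a,r) := \int_0^a 2\Bigl[\frac{1}{r^{n+2s}}-\frac{1}{(r^2+t^2)^{(n+2s)/2}}\Bigr]\,dt
\]
isolates the linear (fractional Laplacian) part, so $g(x') = \int_{\R^{n-1}} H(u(z')-u(x'),|z'-x'|)\,dz'$. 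The function $H$ is odd in $a$ and Taylor expansion of $(r^2+t^2)^{-(n+2s)/2}$ in $t/r$ gives the key pointwise bound $|H(a,r)| \le C|a|^3/r^{n+2s+2}$ whenever $|a|\le r$.

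To prove $|g|\le C$, expand $u(z')-u(x') = \nabla u(x')\cdot (z'-x') + R$ with $|R|\le C|z'-x'|^{1+\beta}$. By oddness of $H$, the term $H(\nabla u(x')\cdot (z'-x'),|z'-x'|)$ integrates to zero on symmetric balls centered at $x'$, so for $|z'-x'|\le 1$ one only needs to control the remainder, which is bounded by $C|R|\sup|\psi|$ with $\psi(t,r):=r^{-n-2s}-(r^2+t^2)^{-(n+2s)/2}$. Since $\nabla u(0)=0$ and $\|u\|_{C^{1,\beta}}\le 1$ give $|\nabla u(y')|\le |y'|^\beta$, one has $|t|\le C(|x'|+|z'-x'|)^\beta|z'-x'|$ in the relevant range and $\sup|\psi|\le C(|x'|+|z'-x'|)^{2\beta}/|z'-x'|^{n+2s}$, producing an integrand of size $(|x'|+|z'-x'|)^{2\beta}|z'-x'|^{1+\beta-n-2s}$, integrable at the origin thanks to $\beta>2s$. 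The contribution from $|z'-x'|>1$ is handled using the extension $u\equiv 0$ outside $B'_2$ and the fast decay of $H$.

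For the H\"older modulus, set $d=|x'_1-x'_2|$, $M=\max|x'_i|$ (so $d\le 2M$), and substitute $w'=z'-x'_i$ to write
\[
g(x'_1)-g(x'_2) = \int_{\R^{n-1}} \bigl[H(a_1(w'),|w'|)-H(a_2(w'),|w'|)\bigr]\,dw', \qquad a_i(w'):=u(x'_i+w')-u(x'_i).
\]
I split into three regions. On $|w'|\le 2d$, for each $i$ the odd function $H(\nabla u(x'_i)\cdot w',|w'|)$ integrates to zero on the symmetric ball, reducing the estimate to the remainder $|H(a_i)-H(\nabla u(x'_i)\cdot w')|\le C(M+|w'|)^{2\beta}|w'|^{1+\beta-n-2s}$; integration yields $CM^{2\beta}d^{\beta-2s}+Cd^{3\beta-2s}$, the second term absorbed via $M\ge d/2$. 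On $|w'|>1$, the Lipschitz bound $|a_1-a_2|\le Cd$ combined with the fast decay of $H$ at infinity gives the $Cd$ contribution.

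The main obstacle is the middle region $2d<|w'|<1$: the naive H\"older bound $|a_1-a_2|\le Cd^\beta|w'|$ produces a stray $Cd^\beta$ term incompatible with the claim. The resolution is to use instead the sharper representation
\[
a_1(w')-a_2(w') = \int_0^1 \bigl[\nabla u(x'_2+w'+t(x'_1-x'_2))-\nabla u(x'_2+t(x'_1-x'_2))\bigr]\cdot (x'_1-x'_2)\,dt,
\]
which by H\"older continuity of $\nabla u$ applied over the displacement $w'$ (rather than over $x'_1-x'_2$) gives $|a_1-a_2|\le C|w'|^\beta d$. Combined with $\sup|\psi|\le C(M+|w'|)^{2\beta}|w'|^{-n-2s}$ (using again $|\nabla u(y')|\le C|y'|^\beta$), one gets $|H(a_1)-H(a_2)|\le Cd(M+|w'|)^{2\beta}|w'|^{\beta-n-2s}$. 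Integrating against $|w'|^{n-2}\,d|w'|$ on $[2d,1]$ and splitting the range at $|w'|=M$ reduces matters to elementary one-dimensional integrals which, using $d\le 2M$ and $\beta<1+2s$, sum to $CM^{2\beta}d^{\beta-2s}+Cd$. Combining the three regions yields the stated estimate.
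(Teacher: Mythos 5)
Your proof is correct and follows essentially the same route as the paper's: you split the relevant integral into the three radial regions $|w'|\lesssim d$, $d\lesssim |w'|\lesssim 1$, $|w'|\gtrsim 1$; you remove the linear (odd) contribution in the near region by a symmetry/oddness cancellation (the paper does this by subtracting the tangent subgraph $T_i$); and in the middle region you use exactly the paper's key bound $|v_1(y')-v_2(y')|\le C|y'|^\beta d$ (your $|a_1-a_2|\le C|w'|^\beta d$ via the integral representation). The only real difference is presentational: you perform the vertical Fubini once and for all to isolate the explicit correction kernel $H(a,r)$ with the pointwise bound $|H|\lesssim |a|^3/r^{n+2s+2}$, whereas the paper manipulates the set differences $\chi_{E_i}-\chi_{T_i}$ directly and approximates $|y|^{-(n+2s)}$ by $|y'|^{-(n+2s)}(1+O(r^{2\beta}))$; the resulting one-dimensional integrals and the final bounds are the same.
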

\begin{proof}
Let $x'_1, x'_2 \in B'_r$ and $|x'_1-x'_2|=d$. Call $E_i $ the translation of $E$ by $-x_i=-(x'_i, u(x'_i))$ that is
$$E_i=E - x_i, \quad i=1,2.$$
Let $\p E_i = \{(y', v_i(y'))\},$ then
$$v_i(y')=u(x_i'+y')-u(x_i'),$$
and from the $C^{1,\beta}$ continuity of $u$ we obtain
%$$|\nabla v_i(y')| \leq C(r+|y'|)^\alpha, \quad i=1,2,$$
\be \label{70} |v_1(y') - v_2(y')| \leq C |y'|^\beta  d.\ee
We have 
\begin{align*}
\Delta^{\frac 1 2 +s} u(x_i')&=\int_{B_d'} \frac{v_i(y')}{|y'|^{n+2s}}dy' + \int_{B_1'\setminus B_d} \frac{v_i(y')}{|y'|^{n+2s}}dy' + \int_{\C B'_1(x_i')} \frac{u(x')-u(x_i')}{|x'-x_i'|^{n+2s}}dx' \\
&=: I_1(x_i) + I_2(x_i)+I_3(x_i)
\end{align*}
and
\begin{align*}
K_E(x_i',u(x_i'))=&\int_{B_d' \times [-1,1]} \frac{\chi_{E_i}- \chi_{\C E_i}}{|y|^{n+2s}}dy + \int_{(B_1'\setminus B_d') \times [-1,1]}  \frac{\chi_{E_i} - \chi_{\C E_i}}{|y|^{n+2s}}dy \\
& + \int_{\C (B'_1(x_i') \times [-1,1])} \frac{\chi_E-\chi_{\C E}}{|x-x_i|^{n+2s}}dx \\
=:& J_1(x_i) + J_2(x_i)+J_3(x_i).
\end{align*}
First we show that
\be\label{71}
J_1(x_i)=2I_1(x_i)+ O \left (r^{2 \beta} d ^{\beta-2s} \right).\ee
Indeed, $J_1(x_i)$ vanishes if we replace $E_i$ by the subgraph $T_i$ of the tangent plane of $\p E_i$ at 0. Hence
$$J_1(x_i)=2 \int_{B'_d\times [-1,1]} \frac{\chi_{E_i} - \chi_{T_i}}{|y|^{n+2s}} dy.$$
In the region where $\chi_{E_i} \ne \chi_{T_i}$ we use that the tangent plane has slope at most $C r^\beta$ and we write
$$\frac{1}{|y|^{n+2s}} = \frac{1}{|y'|^{n+2s}} (1+O(r^{2\beta})).$$
 We obtain
$$J_1(x_i) = 2 \int_{B'_d} \frac{v_i(y') - \nabla v_i(0) \cdot y'}{|y'|^{n+2s}} dy' + O(r^{2\beta }d^{\beta-2s}),$$
since
$$\int_0^d t^{1+\beta} t^{-n-2s} r^{2\beta} t^{n-2} dt = C r^{2\beta} d^{\beta-2s},$$
and we proved \eqref{71}.

Next we show that 
\be \label{72}
J_2(x_2)-J_2(x_1)=2(I_2(x_2)-I_2(x_1)) + O \left (r^{2 \beta} d ^{\beta-2s} +d\right).\ee
We have
$$J_2(x_2)-J_2(x_1)= 2 \int_{(B'_1 \setminus B'_d) \times [-1,1]} \frac{ \chi_{E_2} - \chi_{E_1}}{|y|^{n+2s}} dy,$$
and in the set where $\chi_{E_1}(y) \ne \chi_{E_2}(y)$ we have
$$\frac{1}{|y|^{n+2s}} = \frac{1}{|y'|^{n+2s}} (1+O\left ( (r+|y'|)  ^{2\beta} \right)).$$
Thus
$$J_2(x_2)-J_2(x_1)=2 \int_{B_1'\setminus B_d'} \frac{v_2(y')-v_1(y')}{|y'|^{n+2s}} dy' + O(\gamma)$$
and by \eqref{70} we have
$$\gamma:= d \int_d^1  t^\beta t^{-n-2s}(r+t)^{2\beta} t^{n-2} dt \le C( r^{2 \beta} d^{\beta-2s} + d)$$
and we proved \eqref{72}.

Finally, it easy to check that 
$$ |J_3(x_2)-J_3(x_1)| \le Cd,\quad \quad |I_3(x_2)-I_3(x_1)| \le Cd, $$
since the domain of integrations for $J_3(x_1)$ and $J_3(x_2)$, and $I_3(x_1)$, $I_3(x_2)$ respectively, differ by a set of measure proportional to $d$, and in the common domain of integration we use
$$\frac{1}{|x-x_2|^{n+2s}}-\frac{1}{|x-x_1|^{n+2s}}=O(\frac{d} {|x|^{n+1+2s}}),$$
respectively
$$ \frac{1}{|x'-x'_2|^{n+2s}}-\frac{1}{|x'-x'_1|^{n+2s}}=O(\frac{d} {|x'|^{n+1+2s}}), \quad \quad  u(x'_2) -u(x_1')=O(d).$$
Now the conclusion follows from the inequalities above and \eqref{71}, \eqref{72}.

\end{proof}

As a consequence of Lemma \ref{change} and the Almgren monotonicity formula in \cite{CDS} we obtain the optimal regularity in the obstacle problem for the nonlocal minimal 
surfaces in the case when the 
constrained minimal surface $\p E$ is of class $C^{1,\beta}$ with $\beta$ close to the optimal exponent $\frac 12 +s$. 

\begin{cor}\label{c6} Let $\p E=\{(x',u(x')| \quad x' \in B_2'\}$ be a nonlocal minimal surface constrained 
above an obstacle $\p \mathcal O=\{(x',\varphi(x')  \}$ as in
Theorem \ref{main_reg_OMS}. Assume that $u \in C^{1,\beta}$ for $\beta =\frac 12 + s -\frac{\delta}{2}$, 
and $\varphi \in C^{1,\alpha}$ with $\alpha > \frac 12 + s$. 
Assume further that $0 \in \p_{\R^{n-1}} \{u>\varphi\}$ and $\nabla u(0)=0$. 
Then $u$ is pointwise $C^{1,\frac 12 + s}$ at the origin.
\end{cor}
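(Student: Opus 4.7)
The plan is to translate the constrained nonlocal minimal surface problem into a genuine fractional obstacle problem for the operator $\Delta^{\frac 12 +s}$ and then invoke the optimal regularity result of \cite{CDS} obtained via the Almgren monotonicity formula. After a vertical translation we may assume $u(0)=\varphi(0)=0$, and since $\nabla u(0)=0$, Lemma \ref{change} applies to $u$ (extended by $0$ outside $B'_2$), yielding
$$2\Delta^{\frac 12+s}u(x') = K_E(x',u(x')) + g(x'), \qquad x'\in B'_1,$$
with $g$ bounded and enjoying the modulus of continuity
$$|g(x_1')-g(x_2')|\le C(\max|x_i'|)^{2\beta}|x_1'-x_2'|^{\beta-2s}+C|x_1'-x_2'|.$$

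Now I would use the two viscosity conditions \eqref{vsob1}–\eqref{vsob2}: on the set $\{u>\varphi\}$ the surface $\partial E$ is a viscosity $s$-minimal surface, so $K_E(x',u(x'))=0$, while on the whole of $B'_1$ it is a viscosity supersolution, so $K_E\le 0$. Therefore, setting $w:=u-\varphi$, which is $\ge 0$ and vanishes at the origin together with $\nabla w(0)=0$ (recall $\varphi\in C^{1,\alpha}$ with $\alpha>\frac 12+s$, and $0$ is a free boundary point with horizontal tangent), we obtain the fractional obstacle problem
$$\Delta^{\frac 12+s}w = h \text{ in } \{w>0\},\qquad \Delta^{\frac 12+s}w \le h \text{ in } B'_1, \qquad w\ge 0,$$
with $h:=\tfrac 12 g - \Delta^{\frac 12 +s}\varphi$. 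The hypothesis $\varphi\in C^{1,\alpha}$ with $\alpha>\frac 12+s$ ensures $\Delta^{\frac 12+s}\varphi$ is Hölder continuous in a neighbourhood of the origin, and specializing the estimate on $g$ to $x_2'=0$ gives $|g(x')-g(0)|\le C|x'|^{3\beta-2s}+C|x'|$, which with $\beta=\frac 12+s-\frac \delta 2$ produces a definite positive Hölder exponent. Thus $h$ is Hölder at the origin.

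The final step is to apply the pointwise optimal regularity result at free boundary points for the obstacle problem associated with $\Delta^{\frac 12 +s}$ proved via the Almgren monotonicity formula in \cite{CDS} (compare also \cite{CSS}): it guarantees that $w$ is pointwise $C^{1,\frac 12+s}$ at the origin, and combined with the $C^{1,\alpha}$ regularity of $\varphi$ (with $\alpha>\frac 12 +s$) this gives the same conclusion for $u$. The main technical obstacle, and the reason for the precise quantitative form of Lemma \ref{change}, is to check that the error $g$ has the Hölder regularity at $0$ required by the Almgren monotonicity approach; the assumption $\beta>2s$ in Lemma \ref{change} and the choice $\beta=\frac 12 + s -\frac \delta 2$ (with $\delta$ small) are precisely what make the exponent $3\beta-2s$ positive so that the RHS is admissible. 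Aside from this, the argument is a direct reduction, with the viscosity nature of both the $s$-minimality and the Almgren framework making the distinction between classical and viscosity solutions harmless.
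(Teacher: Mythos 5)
Your argument is correct and follows exactly the paper's route: Lemma~\ref{change} together with \eqref{vsob1}--\eqref{vsob2} cast $u$ as a solution of the obstacle problem for $\Delta^{\frac 12+s}$ with obstacle $\varphi$, and then Proposition~\ref{ls5} (i.e.\ Proposition~6.4 of \cite{CDS}, via the Almgren monotonicity formula) gives the pointwise $C^{1,\frac 12+s}$ conclusion at the origin. Two small points of precision worth noting: the hypothesis actually required in Proposition~\ref{ls5} is the decaying seminorm bound $[g]_{C^\delta(B'_r)}\le C r^{1-(\frac 12 +s)}$, not merely pointwise H\"older continuity of $g$ at $0$, which is why the paper records $3\beta-2s>1$ rather than just $3\beta-2s>0$; and if you reduce to the zero obstacle by subtracting $\varphi$ (the paper instead keeps $\varphi$ as the obstacle in Proposition~\ref{ls5}), you must check that $\Delta^{\frac 12+s}\varphi$ also satisfies $[\Delta^{\frac 12+s}\varphi]_{C^\delta(B'_r)}\le C r^{1-(\frac 12+s)}$, which does hold because $\alpha>\frac 12+s$ gives $\Delta^{\frac 12+s}\varphi\in C^{\alpha-2s}$ with $\alpha-2s-\delta\ge\tfrac 12-s$ for $\delta$ small, but ``H\"older continuous'' alone is not the right condition to invoke.
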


Indeed, $u$ solves the obstacle problem for $\Delta^{\frac 12 +s}$ with obstacle $\varphi$ and 
right hand side $\frac 12 g(x')$ with $g$ as in \eqref{700}. Moreover, from the modulus of continuity of $g$ given in \eqref{701}, and since 
$3\beta -2s>1$, we have the following H\"older bounds for $g$ in balls $B_r'$ centered at the origin:
$$[g]_{C^\delta(B_r')} \le C r^{1-\delta} \le C r^{1-(\frac 12 +s)}.$$
Now Proposition 6.4 in \cite{CDS} applies and we obtain that $u$ is pointwise $C^{1,\frac 12 +s}$ at the origin.
For convenience of the reader, we state this proposition below (in our case $\bar s =1/2+s$).

\begin{prop} \label{ls5} Let $u \in C^{2\bar s+\eps}, \bar s \in [\frac 12, 1),$ solve the obstacle problem\begin{equation}\label{opti3}
u \ge \varphi \quad \mbox{in $B_1$} ,
\end{equation}
\begin{equation}\label{opti4}
\mathcal \triangle^{\bar s} u \le g \quad \mbox{in $B_1$}, \quad \mbox{and} \quad \quad \quad \mathcal \triangle^{\bar s} u = g \quad \mbox{in} \quad \{u>\varphi\} \cap B_1.  
\end{equation}
Assume that $\|u\|_{L^1(\R^n,d\omega )} \le 1$ and $\nabla u$ is pointwise $C^{\bar s- \frac \delta 2}$ at $0 \in \p \{u=\varphi\}$, i.e.
\begin{equation}\label{uo}
|\nabla u(x)| \le |x|^{\bar s-\frac \delta 2}\quad \quad \mbox{in $B_1$.}
\end{equation}
If $\varphi \in C^{2\bar s+\eps}$, $\nabla \varphi$ is pointwise $C^{\bar s+\delta}$ at the origin i.e., for all $r <1$
$$ [\nabla \varphi]_{C^{2\bar s + \delta -1}(B_r)} \le r^{1-\bar s},$$
and $g$ satisfies
$$[g]_{C^\delta(B_r)} \le C r ^{1-\bar s}$$ 
then $u$ is pointwise $C^{1,\bar s}$ at the origin i.e. 
\be\label{conu}
|u(x)| \le C |x|^{1+\bar s} \quad \quad \mbox{in $B_{1}$},\ee
for some $C$ depending only on $n$, $\bar s$ and $\delta$.
\end{prop}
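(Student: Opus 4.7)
The natural strategy is a blow-up contradiction argument combined with the Almgren monotonicity formula for the Caffarelli--Silvestre extension, in the same spirit as the classical obstacle problem for $(-\Delta)^{\bar s}$. Assume \eqref{conu} fails; then a standard maximal-scale selection, using the pointwise $C^{1+\bar s - \delta/2}$ bound \eqref{uo}, produces a sequence $r_k \downarrow 0$ with $M_k := \sup_{B_{r_k}} |u|$ satisfying $M_k / r_k^{1+\bar s} \to \infty$, together with an almost-maximality property controlling $\sup_{B_\rho} |u|$ for all $\rho \geq r_k$. Define the rescaled functions
\[ v_k(x) := \frac{u(r_k x)}{M_k}, \qquad \tilde\varphi_k(x) := \frac{\varphi(r_k x)}{M_k}, \qquad \tilde g_k(x) := \frac{r_k^{2\bar s}\, g(r_k x)}{M_k}. \]
Then $\|v_k\|_{L^\infty(B_1)} = 1$, $v_k(0) = |\nabla v_k(0)| = 0$, and \eqref{uo} together with the maximality of $r_k$ yields the polynomial growth bound $|v_k(x)| \leq C(1+|x|)^{1+\bar s - \delta/2}$ on $B_{1/r_k}$. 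The pointwise $C^{1+\bar s+\delta}$ assumption on $\varphi$ and the H\"older decay of $g$ force $\tilde\varphi_k \to 0$ in $C^{1+\bar s}_{\mathrm{loc}}$ and $\tilde g_k \to 0$ locally uniformly.

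Controlling tails with the weighted $L^1(\R^n, d\omega)$ bound, Arzel\`a--Ascoli extracts a subsequential limit $v_\infty$ solving the global obstacle problem
\[ v_\infty \ge 0, \quad \triangle^{\bar s} v_\infty \le 0 \text{ in } \R^n, \quad \triangle^{\bar s} v_\infty = 0 \text{ on } \{v_\infty > 0\}, \]
with $v_\infty(0) = |\nabla v_\infty(0)| = 0$, $\|v_\infty\|_{L^\infty(B_1)} = 1$, and $|v_\infty(x)| \leq C(1+|x|)^{1+\bar s - \delta/2}$. Lifting $v_\infty$ to its Caffarelli--Silvestre extension in $\R^{n+1}_+$ with weight $y^{1-2\bar s}$ reduces the problem to a thin obstacle problem on $\{y=0\}$. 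The Almgren monotonicity formula of \cite{CDS} gives that the associated frequency $N(r)$ is nondecreasing; the growth bound at infinity forces $N(\infty) \leq 1 + \bar s - \delta/2$, while the classification of admissible homogeneities at a free-boundary point where the value and gradient both vanish forces $N(0^+) \geq 1+\bar s$. (If instead $0$ is interior to the contact set $\{v_\infty=0\}$, a Liouville-type argument using the growth bound and harmonicity on $\{v_\infty>0\}$ rules this case out.) Since $N(0^+) \leq N(\infty)$, this is the desired contradiction.

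The main technical obstacle is justifying the Almgren monotonicity in the perturbative setting: the non-zero obstacles $\tilde\varphi_k$ and inhomogeneities $\tilde g_k$ generate lower-order terms in the frequency identity that must decay fast enough to vanish in the limit. The quantitative hypotheses $[g]_{C^\delta(B_r)} \leq C r^{1-\bar s}$ and $[\nabla\varphi]_{C^{2\bar s+\delta-1}(B_r)} \leq r^{1-\bar s}$ are calibrated precisely so that, after the rescaling (whose amplification factor $M_k/r_k^{1+\bar s}$ diverges to infinity), these error terms disappear in the limit and the classical Almgren argument applies to $v_\infty$, yielding \eqref{conu}.
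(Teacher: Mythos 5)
The paper does not give a proof of Proposition~\ref{ls5}. It is stated explicitly as a restatement of Proposition~6.4 of \cite{CDS}, quoted for the reader's convenience, and then invoked with $\bar s=\frac12+s$ to deduce Corollary~\ref{c6}. There is therefore no proof in this paper against which your attempt can be compared; one would have to inspect the proof in \cite{CDS} itself.

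That said, your blow-up plus Almgren-monotonicity outline is consistent with the strategy the paper attributes to \cite{CDS}, but it has at least one concrete gap as written. You claim that $\tilde g_k(x)=r_k^{2\bar s}g(r_kx)/M_k$ tends to zero locally uniformly. The hypothesis $[g]_{C^\delta(B_r)}\le Cr^{1-\bar s}$ controls only the \emph{oscillation} of $g$, so it kills $\tilde g_k-\tilde g_k(0)$ (indeed $|\tilde g_k(x)-\tilde g_k(0)|\le C r_k^{1+\bar s+\delta}/M_k\to0$ on compacts, since $M_k/r_k^{1+\bar s}\to\infty$). It does not control the constant $\tilde g_k(0)=r_k^{2\bar s}g(0)/M_k$, and writing
$$\frac{r_k^{2\bar s}}{M_k}=\frac{r_k^{\bar s-1}}{M_k/r_k^{1+\bar s}}$$
this is the ratio of two quantities both tending to $+\infty$ (recall $\bar s<1$), hence indeterminate. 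Unless you first normalize — for instance by subtracting from $u$ a fixed function $P$ of homogeneity $2\bar s$ with $\triangle^{\bar s}P\equiv g(0)$, noting $2\bar s<1+\bar s$, and correspondingly correcting the obstacle — your limiting equation is not $\triangle^{\bar s}v_\infty=0$ on $\{v_\infty>0\}$ and the frequency classification you invoke does not apply. A similar normalization is needed for $\triangle^{\bar s}\varphi(0)$. The rest of the sketch (frequency gap $N(0^+)\ge 1+\bar s$ versus $N(\infty)\le 1+\bar s-\delta/2$, and the strong-maximum-principle Liouville step when $0$ is interior to the contact set) names the right ingredients, but none of the error terms in the perturbed Almgren identity are actually estimated; at the level of a plan this is reasonable, but it is not yet a proof.
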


\smallskip

We are now ready to provide the proof of our main Theorem \ref{main_reg_OMS}.

\medskip

\noindent {\it Proof of Theorem $\ref{main_reg_OMS}$.} In view of Lemma \ref{f_red},
we can apply Theorem \ref{imp_OMS} for a dilation of $E$ and obtain that locally 
$\p E=\{(x',u(x'))\}$ is a $C^{1,\beta}$ graph with $\beta$ sufficiently close to $\frac 12 + s$.
Then we apply Corollary \ref{c6} and obtain that $u$ is pointwise $C^{\frac 12 +s}$ at all points on 
the free boundary $\p_{R^{n-1}}\{u> \varphi \} \cap B'_{\delta_0}$. Now it is standard to extend this regularity of $u$ 
in a full neighborhood of the origin, in view of Theorem \ref{imp_MS}.

\qed

\section{The two membranes problem}

In this section, we use the result obtained in Section 2, to prove our main Theorem \ref{T4}. 

Let $\mathcal G$ be the functional defined in the Introduction,
$$
\mathcal G(E,F) : =\mathcal P^s_{B_1}(E) + Per_{B_1}(F) + \int_{B_1} (f \chi_E  + g \chi_F) dx $$
with $f, g \in L^\infty(B_1).$
Let $E_0$, $F_0$ be two sets such that $F_0 \cap B_1 \subset E_0 \cap B_1$ and $\mathcal G
(E_0,F_0) < \infty$.  

We minimize $\mathcal G$ in the class of admissible pairs of sets in $\R^n$:
$$\mathcal A:= \{(E,F) \ | F \subset E \quad \mbox{in $B_1$}, \quad F=F_0, \quad E=E_0 \quad 
\mbox{outside $B_1$}  \}.$$

 By the compactness of the spaces 
$H^s(B_1)$, $BV(B_1)$ in $L^1(B_1)$ 
and the lower semicontinuity of $\mathcal G$ with respect to the $L^1$ convergence, 
we obtain the existence of a minimizing pair $(E,F)$.  We study here the regularity properties of this pair.

For simplicity we take $f=g=0$ since the general case follows similarly. We need the following preliminary results.

\begin{lem}\label{almost}$\p F$ is an almost minimal surface i.e. for any compact perturbation $F_0$ of $F$ in any ball $B_r(x_0) \subset B_1$ we have
$$Per_{B_2}(F) \le Per_{B_2}(F_0) + C r^{n-1+2\sigma}, \quad 2\sigma=1-2s.$$ In particular, $\p F$ is a $C^{1,\sigma}$ surface, except possibly on a singular set $\Sigma$ of Hausdorff dimension $n-8$. \end{lem}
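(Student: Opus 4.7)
The plan is to compare $(E,F)$ with an admissible competitor that modifies only $F$ in a ball, at the cost of a controlled change in the $s$-perimeter of $E$. Concretely, for a compact perturbation $F'$ of $F$ in a ball $B_r(x_0) \subset B_1$, I set $E' := E \cup F'$ and test the minimality of $(E,F)$ against the pair $(E', F')$. This pair is admissible: the inclusion $F' \subset E'$ holds in $B_1$ by construction; outside $B_r(x_0)$ (but inside $B_1$) one has $F' = F \subset E$, so $E' = E$ there; and outside $B_1$ the prescribed data are preserved.

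Inserting $(E', F')$ into the minimality inequality for $\mathcal G$ and using that $F = F'$ outside $B_r(x_0)$, I obtain
\[
Per_{B_1}(F) - Per_{B_1}(F') \;\le\; \mathcal P^s_{B_1}(E \cup F') - \mathcal P^s_{B_1}(E).
\]
The symmetric difference $(E \cup F')\, \triangle\, E$ is contained in $B_r(x_0)$, so the right-hand side measures the cost of modifying $E$ inside a ball of radius $r$. A direct estimate---splitting the double integral defining $\mathcal P^s_{B_1}$ into near and far contributions relative to $B_r(x_0)$ and using that the $s$-perimeter of a ball of radius $r$ scales like $r^{n-2s}$---yields a bound of the form $C r^{n-2s}$. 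Since $n-2s = n-1+2\sigma$ with $2\sigma = 1-2s$, this is precisely Tamanini's almost-minimality condition for $\partial F$ with exponent $\sigma \in (0, 1/2)$.

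The $C^{1,\sigma}$ regularity of $\partial F$ off a singular set $\Sigma$ of Hausdorff dimension at most $n-8$ is then a direct consequence of the classical regularity theory for almost-minimizers of the perimeter functional \cite{A, T}.

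The main technical point is the $s$-perimeter change estimate with the correct scaling $r^{n-2s}$: it requires some care near the diagonal and at infinity when $A \triangle A' \subset B_r(x_0)$, but this is a routine computation in nonlocal minimal surface theory of the type already used for the density estimates in \cite{CRS}. Once this estimate is in hand, the regularity conclusion follows from black-box results of Almgren--Tamanini type.
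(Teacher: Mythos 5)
Your overall strategy is the same as the paper's: perturb $E$ inside a ball so that $(E',F')$ is admissible, bound the resulting change in $\mathcal P^s_{B_1}$ by $Cr^{n-2s}=Cr^{n-1+2\sigma}$, and invoke the Almgren--Tamanini regularity theory. The one genuine gap is in the admissibility check for your competitor $E'=E\cup F'$. You assert that ``outside $B_1$ the prescribed data are preserved,'' but this is false in general: outside $B_1$ one has $E'=E_0\cup F_0$, and the hypotheses only guarantee $F_0\cap B_1\subset E_0\cap B_1$, not $F_0\subset E_0$ outside $B_1$. So as written $(E',F')$ may fail the constraint $E'=E_0$ outside $B_1$, and with it your claim that $E'\triangle E\subset B_r(x_0)$. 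The fix is immediate: take $E':=E\cup\bigl(F'\cap B_r(x_0)\bigr)$ (or note that $F'\setminus F\subset B_r(x_0)$ and set $E':=E\cup(F'\setminus F)$). The paper avoids the issue altogether by choosing the even coarser competitor $E':=E\cup B_r(x_0)$, which is automatically equal to $E$ outside $B_r(x_0)$ and for which the energy change is computed explicitly as $L\bigl(E'\setminus E,\,\C E\setminus B_r(x_0)\bigr)-L(E'\setminus E,E)\le L(B_r(x_0),\C B_r(x_0))\le Cr^{n-2s}$. Your competitor, once corrected, is a sharper perturbation but yields the same bound, so there is no practical gain. The concluding appeal to Tamanini's theory is correct.
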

\begin{proof}
Indeed, the pair $(E_0,F_0)$ with $E_0=E \cup B_r(x_0)$ is an admissible pair since $F_0 \cap B_1 \subset E_0 \cap B_1$. Thus, the minimality of $(E,F)$ gives
\begin{align*} Per_{B_2}(F) - Per_{B_2}(F_0) & \le \mathcal P^s_{B_1}(E_0) -  \mathcal P^s_{B_1}(E) \\
& = L(E_0 \setminus E, \C E \setminus B_r(x_0)) - L(E_0 \setminus E, E) \\
& \le L(B_r(x_0), \C B_r(x_0)) \\
& \le C r^{n-2s}, 
\end{align*}
where $L(A,B)$ is defined in \eqref{L}.

In conclusion $\p F$ is an almost minimal surface in the sense of Almgren and Tamanini \cite{A,T}, which means that $\p F$ is a $C^{1,\sigma}$ surface, with $2\sigma=1-2s$, except possibly on a singular set of Hausdorff dimension $n-8$.

\end{proof}

\begin{lem}\label{visc_F} Assume $F$ is a subgraph of a $C^{1,\sigma}$ graph in the $e_n$ direction in $B_1$.
Let $\kappa_F$ and $K_E$ denote respectively the mean curvature of $\p F$ and the $s$-fractional curvature of $\p E,$ and set $Q=\p E \cap \p F$. Then
\begin{enumerate}
\item the following holds in the viscosity sense:\begin{align}\label{11}&\kappa_F \ge 0, \quad K_E \le 0, \\
\label{12} &\kappa_F=0, \quad K_E=0 \quad \mbox{away from $Q$} ,\\
\label{13}&\kappa_F + 2 K_E\leq 0 \quad \mbox{on $Q$.}\end{align}
\item If $\p E \in C^{1,\beta}$ with $\beta >2s$, the following holds in the viscosity sense:
\be \label{14}\kappa_F +  K_E= 0 \quad \mbox{on $Q$.}\
\ee
\end{enumerate}
\end{lem}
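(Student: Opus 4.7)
The plan is to exploit the variational principle: the minimality of $(E,F)$ in the class $\mathcal{A}$ against admissible competitor pairs. Each of the stated (in)equalities arises from a carefully chosen one-parameter family of admissible perturbations, with the sign and coefficient on $K_E$ and $\kappa_F$ reflecting exactly how the competitor is built. I work locally in subgraph coordinates, $\partial F = \{x_n = v(x')\}$ and $\partial E = \{x_n = u(x')\}$, with $v \leq u$ and $Q = \{v = u\}$.

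I would first handle Part (1). For the free parts, at $x_0 \in \partial F \setminus Q$ any small two-sided normal perturbation of $\partial F$ preserves $F \subset E$, so minimality of $Per_{B_1}$ forces $\kappa_F = 0$; the symmetric argument on $\partial E \setminus Q$ gives $K_E = 0$. For the one-sided inequalities on $Q$, shrinking $F$ alone via $v \mapsto v - t\eta$ with $\eta \geq 0$ is always admissible because $F_t \subset F \subset E$, which yields $\kappa_F \geq 0$; expanding $E$ alone by $u \mapsto u + t\eta$ is always admissible, which yields $K_E \leq 0$. For the joint inequality $\kappa_F + 2K_E \leq 0$, the key admissible competitor is $u_t = u + 2t\eta$, $v_t = v + t\eta$ with $\eta \geq 0$ a smooth bump near $x_0 \in Q$; admissibility reduces to $v - u \leq t\eta$, which is trivial since $v \leq u$ and $t,\eta \geq 0$. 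Combining the first variations of $\mathcal{P}^s_{B_1}$ and $Per_{B_1}$ produces
\[
\delta \mathcal{G} = - t \int \bigl(2 K_E + \kappa_F\bigr)\, \eta \, dx' + o(t) \geq 0,
\]
which gives $\kappa_F + 2K_E \leq 0$ at $x_0$ in the viscosity sense. The coefficient $2$ is precisely the ratio I have chosen between the vertical displacements of the two surfaces.

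For Part (2), under the stronger hypothesis $\partial E \in C^{1,\beta}$ with $\beta > 2s$ the fractional curvature $K_E$ is pointwise defined and continuous (by Lemma~\ref{change}), and I would switch to the two-sided joint vertical shift $(E_t, F_t) = (\phi_t(E), \phi_t(F))$, where $\phi_t = \mathrm{id} + t \eta \, e_n$ is localized near $x_0$. This competitor is admissible for \emph{both} signs of $t$, since $\phi_t$ is a bijection and so preserves the containment $F \subset E$. Two-sided admissibility forces the first variation of $\mathcal{G}$ to vanish identically, and localization near $x_0 \in Q$ yields the pointwise equation $\kappa_F + K_E = 0$.

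The main obstacle lies in Part (2). On the interior of the contact set, where $\partial E$ and $\partial F$ locally coincide, the analysis reduces to a standard inner-variation calculus on a single surface. At the isolated contact points of $\partial Q$, however, the two curvature contributions come from distinct surfaces meeting only at a point, and the $C^{1,\beta}$ regularity with $\beta > 2s$ is exactly what is needed to make $K_E$ continuous and evaluable pointwise, so that the balance-of-forces identity $\kappa_F + K_E = 0$ can be propagated from the interior of the contact set through such free-boundary points.
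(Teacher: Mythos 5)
Your proposed competitor $u_t = u + 2t\eta$, $v_t = v + t\eta$ does not actually explain the factor $2$ in \eqref{13}; it only produces it as an artifact of a free scaling choice. Nothing in your setup prevents you from instead taking $u_t = u + t\eta$, $v_t = v + t\eta$, which is equally admissible ($v + t\eta \le u + t\eta$ iff $v \le u$) and would give, by your claimed first-variation formula, the \emph{stronger} inequality $\kappa_F + K_E \le 0$ with no hypothesis on $\p E$. That would make the additional $C^{1,\beta}$ requirement in part (2) superfluous for the $\le$ half and contradicts the structure of the lemma. The actual reason for the $2$ is entirely different: the proof relies on Theorem 5.1 of \cite{CRS}, where the admissible one-sided $s$-perimeter perturbation built from a test paraboloid is not $D_\eps = P_\eps \cap \C E$ but the \emph{reflected doubling} $D_\eps^* = D_\eps \cup T(D_\eps)$; the factor $2$ comes from $|D_\eps^*| / |D_\eps| \to 2$, not from a chosen displacement ratio. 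In short, you have used as an ingredient (``$\delta \mathcal P^s_{B_1}[u + \tau \eta] = -\tau \int K_E \eta\, dx' + o(\tau)$'') exactly the statement that is not available without regularity and whose absence is what forces the $2$.

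The same gap shows up in part (2). You say that with $\p E \in C^{1,\beta}$, $\beta > 2s$, the pointwise continuity of $K_E$ lets you run a two-sided inner variation $\phi_t = \mathrm{id} + t\eta e_n$ and ``propagate'' $\kappa_F + K_E = 0$, but this again presupposes that the first variation of the nonlocal perimeter is the distributional integral $-\int K_E\,\eta$, which is the substance of what is to be proved. The paper's actual proof of (2) does not change the competitor: it keeps $A_\eps$ and $D_\eps$ and shows, using the estimate \eqref{da} and a change of variables adapted to $\Sigma = \Gamma + \eps e_n$, that the near-diagonal contributions of $L_\delta(D_\eps, \C E \setminus D_\eps) - L_\delta(D_\eps, E)$ are $o_\delta(1)|D_\eps|$ once $\p E \in C^{1,2s+\sigma}$, so that \eqref{fract_p} can be established with $D_\eps$ in place of $D_\eps^*$. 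That computation is the content of part (2), and your proposal does not engage with it. The one-sided statements \eqref{11}--\eqref{12} and the admissibility bookkeeping in your write-up are fine, but the central step — justifying the nonlocal Euler--Lagrange inequality in a form that can be summed with the perimeter variation — is missing.
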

\begin{proof} For part (i), we only prove \eqref{13}, as \eqref{11}-\eqref{12} are standard (see also \eqref{vsob1}-\eqref{vsob2}.) The fact that this equation is satisfied in a viscosity sense means that if $\Gamma$ is a $C^2$ surface which touches $\p F$ by below at $x_0 \in Q$ then the $s$-fractional curvature $K_E$ is well defined at $x_0$ and we request that 
$$\kappa_\Gamma(x_0) + 2 K_E(x_0) \leq 0.$$ 
Assume $0 \in Q$ and that $\Gamma \in C^2$ touches $\p F$ strictly by below at $0$, say $\Gamma$ is given by $x_n=p$ with $p$ a quadratic polynomial. Call $P_\eps$ the subgraph of $x_n=p+\eps$ (with $\eps$ small), and set $$A_\eps := P_\eps \cap \mathcal C F, \quad D_\eps:= P_\eps \cap \mathcal C E.$$

A standard computation, obtained by integrating by parts $\Delta d$ over $A_\eps$, with $d$ the signed distance function from $x$ to $\Gamma$ (positive above $\Gamma$ in the $e_n$ direction), gives that 
$$\int_{A_\eps} div(\nabla d) dx = \int_{\p A_\eps}  \nabla d \cdot \nu d \mathcal H^{n-1}$$
hence
\be\label{standard_p} |A_\eps| (-\kappa_\Gamma(0)+o_\eps(1)) \geq Per_{B_1}(F \cup A_\eps) -Per_{B_1}(F).
\ee

Unfortunately from the existing literature it is not clear if one can use the same perturbation set $D_\eps$ to obtain a similar inequality for the fractional curvature. Indeed in Theorem 5.1 \cite{CRS} the authors use the perturbation of $E$ by $D_\eps \cup T(D_\eps)$, where $T$ denotes the reflection with respect to $\Sigma:= \Gamma+\eps e_n.$ In this way they can use the symmetry of the energy functional, to obtain the Euler-Lagrange equation. Precisely, 
call $$D^*_\eps:= D_\eps \cup T(D_\eps).$$ Then according to Theorem 5.1 in \cite{CRS} we have that for a sequence of $\eps \to 0$
\be\label{fract_p}
|D^*_\eps| (-K_E(0)+o_\eps(1)) \geq \mathcal P_{B_1}^s(E\cup D^*_\eps) - \mathcal P_{B_1}^s(E).
\ee
Notice that $(E \cup D^*_\eps, F \cup A_\eps)$ is an admissible perturbation. Thus, adding up \eqref{standard_p}-\eqref{fract_p} and using the minimality of $(E,F)$ we get that
$$- (\kappa_\Gamma(0)+o_\eps(1)) + \frac{|D^*_\eps|}{|A_\eps|}(-K_E(0)+ o_\eps(1)) \geq 0.$$
Using that $K_E(0) \leq 0$ and $|D^*_\eps| = (2+ o_\eps(1))|D_\eps|$ (and $D_\eps \subset A_\eps$) we obtain the desired claim as $\eps \to 0$.

To prove the claim in (ii) it suffices to repeat the argument above and show that in the case when $\p E$ is sufficiently smooth, then \eqref{fract_p} holds with $D^*_\eps$ replaced by $D_\eps.$ This would lead to $$\kappa_\Gamma(0) + K_E(0) \leq 0.$$
The other inequality can be proved similarly.

We want to show that
$$ \liminf_{\eps \to 0} \frac{1}{|D_\eps|} \left( L(D_\eps, \mathcal C E \setminus D_\eps) - L(D_\eps, E) \right) \le -K_E(0).$$
The double integrals $L(D_\eps, \cdot)$ above are of order greater than $|D_\eps|$ with most of the contribution coming when both $x$, $y$ are sufficiently close to the origin.
In order to obtain the inequality above it suffices to show as in [CRS] that these contributions cancel each others out near the origin i.e.    
\be \frac{1}{|D_\eps|} \left( L_\delta(D_\eps, \mathcal C E \setminus D_\eps) - L_\delta(D_\eps, E) \right) \leq o_\delta(1)\ee
with $o_\delta(1) \to 0$ as $\delta \to 0,$ and for $\eps \ll \delta$, where
\be\label{Ldelta}
L_\delta(A,B): = \int_{\{|x-y|<\delta\}} \frac{1}{|x-y|^{n+2s}} \chi_A(x)\chi_B(y) dx dy.\ee

 We claim that if $A$ is a set such that $\partial A \cap B_1$ is a surface with bounded $C^{1,2s+\sigma}$ norm, and $x \notin  A$ with the distance  $d_A(x)$ from $x $ to $A$ satisfying $d_A(x) \ll \delta$ then
 \be\label{da}
 \int _{B_\delta(x)}  \frac{1}{|x-y|^{n+2s}} \chi_A(y) dy = h(d_A) + o_\delta(1),
 \ee
where $h(d)$ is a decreasing function comparable to $d^{-2s}$.

Indeed, assume for simplicity that $x=0$ and the distance from $x$ to $A$ is realized at $d \, e_n$, and let $P_d=\{x_n >d \}$. Since in $B_1$
$$\{x_n >d+ |x'|^{1+2s+\sigma}\} \subset A \subset \{x_n >  d-|x'|^{1+2s+\sigma}\},$$
we find that $$r^{-(n+2s)}\int_{B_r}|\chi_A -\chi_{P_d}| dy \le C r^ \sigma,$$
hence 
 $$\int _{B_\delta}  \frac{1}{|y|^{n+2s}} \chi_A(y) dy \le  \int _{B_\delta}  \frac{1}{|y|^{n+2s}} \chi_{P_d}(y) dy + C \sum_{d \le r=2^{-k} \le \delta} r^\sigma :=h(d) + o_\delta(1).$$
Clearly $h(d)$ is decreasing with $d$ and after rescaling we see that
$$h(d)=d^{-2s} \int _{B_{\delta/d}}  \frac{1}{|y|^{n+2s}} \chi_{P_1}(y) dy \quad \sim \quad d^{-2s}.$$

This proves claim \eqref{da} and we conclude that if $\p A$ is a $C^{1,2s+\sigma}$ surface and $D$ is a measurable set outside $A$ then
$$L_\delta (D,A)= \int_D  h(d_{A}) dx + o_\delta(1).$$

Let $x_\Sigma$ denote the projection of $x$ on the $C^2$ surface $$\Sigma:=\Gamma + \eps e_n.$$ The set $D_\eps$ is the set between the graphs given by $\Sigma$ and $\p E$ and can written as the region between $\Sigma$ and a graph $\gamma$ with respect to $\Sigma$ i.e.  
$$D_\eps = \{0 < d_\Sigma(x) < \gamma (x_\Sigma)  \} ,$$ 
with $\gamma \in C^{1,2s+\delta}$ a function supported on an $(n-1)$-dimensional domain $ S \subset \Sigma$, and also $\gamma \to 0$ as $\eps \to 0$.

We have
$$L_\delta(D_\eps, \mathcal C E \setminus D_\eps) \le L_\delta(D_\eps, \{x_n \ge P_d+ \eps  \}),$$
hence
$$ L_\delta(D_\eps, \mathcal C E \setminus D_\eps) - L_\delta(D_\eps, E) \le \int_{B_\eps}  h(d_{\Sigma})-h(d_{E}) dx +o_\delta(1).$$
Since $$d_{E}(x) \le f(x_\Sigma) - d_\Sigma(x)$$
and we change variables
$$x=x_\Sigma + \nu d_{\Sigma}(x)= z + \nu_z t, \quad \quad \mbox{with} \quad z \in \Sigma, \quad t \in \R$$
hence
$$dx=G(z,t) dz dt,$$
for a Lipschitz function $G$ defined on $\Sigma \times \R$.
We obtain
$$\int_{B_\eps}  h(d_{\Sigma})-h(d_{E}) dx \le \int_S \int_0^{f(d_\Sigma(z))} \left(h(t)-h(f(d_\Sigma(z))-t)\right) G(z,t) dt dz.$$
Since $G$ is Lipschitz we have $G(z,t)=G(z,0) + O(t)$ hence for fixed $z$ we have
\begin{align*}&\int_0^{f(d)} \left(h(t)-h(f(d)-t)\right) G(z,t) dt\\ &\le G(z,0) \int_0^{f(d)} \left(h(t)-h(f(d)-t)\right)  dt + C \int_0^{f(d)} h(t) t  \, dt\\
& \le C \int_0^{f(d)} t^{1-2s} \, dt \le C f(d) ^{2-2s} = o_\eps(1) f(d).\end{align*}
In conclusion
$$\int_{D_\eps}  h(d_{\Sigma})-h(d_{E}) dx \le o_\eps(1) \int_S f(d_\Sigma(z)) dz \le o_\eps(1) |D_\eps| ,$$
and the proof is completed.
\end{proof}

We are now ready to prove our main Theorem \ref{T4}.

\medskip

{\it Proof of Theorem $\ref{T4}$.} 
From Lemma \ref{almost}, we know that $\p F$ is a $C^{1,\sigma}$ surface, except possibly on a singular set $\Sigma$ of Hausdorff dimension $n-8$. 

We divide the proof in two steps.

\smallskip

{\bf Step 1.} Assume that $0 \in \p F \setminus \Sigma$. We claim that $\p F$ is a $C^{1,\beta}$ surface in a neighborhood of the origin for any $\beta <1$. 

Indeed, near $0$, $F$ is a subgraph of a $C^{1,\sigma}$ graph in some direction, say the $e_n$ direction. Let $\kappa_F$ and $K_E$ denote the mean curvature of the graph $\p F$ and the $s$-fractional curvature of $\p E$ respectively, and let $Q:=\p F \cap \p E$.

Since $F \subset E$ in $B_1$ we have that $K_F$ i.e. the $s$-fractional curvature of $\p F$, satisfies $ K_F \le K_E + C$ at the points on $\p F \cap \p E$ near the origin. We deduce from $(i)$ in Lemma \ref{visc_F} that $\p F$ satisfies the following inequalities in the viscosity sense in a neighborhood of the origin.
$$\kappa_F \ge 0, \quad \kappa_F + 2\chi_{Q} K_F \le C.$$
We claim that the inequalities above give $\p F \in C^{1,\beta}$ for any $\beta<1$. We sketch some of the arguments (see also Proposition 4.11 in \cite{CDS}). 

The rescaling $\tilde F=\frac 1r F$ satsfies
$$\kappa_{\tilde F} \ge 0, \quad \kappa_{\tilde F} +2 r^{1-2s} \chi_{ \tilde Q} K_{\tilde F} \le C r,$$
hence after an initial rescaling we may assume that $\p F$ is sufficiently flat in $B_1$ and satisfies
\be\label{s81}
\kappa_{F} \ge 0, \quad \kappa_{ F} + a \chi_{Q} K_ F \le 1, \quad \mbox{for some $a\in [0,2]$.}\ee

Then we show by induction that there exists a sequence of unit vectors $e_k$ such that
$$\p F \cap B_r \subset \{|x \cdot e_k| \le r^{1+\beta}\},$$
for $r=\rho^k$, provided that the inclusion above already holds for $k \le k_0$. Here $\rho$, $k_0$ are universal constant that depends only on $n$, $s$ and $\beta$ (but not on $a$).

The existence of $k_0$ follows by compactness. Let $K_{r,F}^T$ be the truncated curvature of distance $r$ i.e.
$$K_{r,F}^T(x):=\int_{B_r(x)} \frac{\chi_F-\chi_{\C F}}{|y-x|^{n+2s}}dy.$$
We replace $K$ by $K_{ r/2}^T$ in \eqref{s81}. Assume that $\beta>2s$ and use the induction hypothesis to obtain 
$$ |K_F-K^{T}_{\frac r2,F}| \le C \quad \mbox{on $\p F \cap B_r$}.$$
Thus the rescaling $\tilde F=\frac 1r F$ satisfies in $B_1$
$$\kappa_{\tilde F} \ge 0, \quad \kappa_{\tilde F} + a \,  r^{1-2s}  \, \chi_{ \tilde Q} \, K^{T}_{\frac 12,\tilde F} \le  Cr,$$
and $\p \tilde F \subset \{|x_n| \le r^\beta \} \cap B_1$. 

We are now in the position to apply the ``$\eps$-flatness implies regularity" theory developed in \cite{Sa}, ($\eps=r^\beta$).
The reason for this is that if $P$ is a paraboloid of size $\eta$ that touches $\p \tilde F$ by below at $y$, then $K^T_{\frac 12,\tilde F}(y)\geq -c \eta.$ Thus the inequalities above are perturbations of order smaller than $r^\beta$of the equation $\kappa_{\tilde F}=0$. Hence Harnack inequality holds and it guarantees compactness in the limit (see Theorem 1.1 in \cite{Sa}). This means that as $r \to 0$, the stretching of factor $r^{-\beta}$ in the $x_n$ direction of the sets $\p \tilde F$ converge uniformly to the graph of a harmonic function, and therefore we obtain the desired conclusion.

\smallskip

{\bf Step 2.} Let $0 \in \p E \cap \p F \setminus \Sigma$. By step 1, $\p F$ is $C^{1,\beta}$ for any $\beta<1$ in a neighborhood of the origin, thus we can apply Theorem \ref{main_reg_OMS} and obtain that near $0$, $\p E$ is a graph of a $C^{1,\frac 12 +s}$ function $u$. This means that $K_E(x',u(x'))$ is a $C^{\frac 12 -s}$ function of $x'$ (see Lemma \ref{change} for example). From the Euler-Lagrange equation in $(ii)$ Lemma \ref{visc_F}, we obtain $\kappa_F$ is a $C^{\frac 12 -s}$ function of $x'$, hence $\p F \in C^{2,\frac 12 -s}$.

\qed

\end{document}